\documentclass[
a4paper,				
11pt,
english
]{extarticle}

\newif\ifshow
\showtrue   

\makeatletter
\title{On the Role of the Double Fourier Sphere Method in Fast Algorithms on SO(3)}\let\Title\@title
\newcommand{\Date}{June 7, 2026}
\date{Institute for Applied Analysis, TU Bergakademie Freiberg \\ \Date}
\makeatother
\newcommand{\keywords}{Fast algorithms,Double Fourier Sphere,Rotation group,Harmonic analysis,Wigner transform,Crystallographic texture analysis}

\usepackage{header}
\usepackage{headerFormulas}

\usepackage[final]{microtype}	

\usepackage{soul}
\usepackage[normalem]{ulem}
\usepackage[most]{tcolorbox}

\newif\ifshowchanges
\showchangestrue
\showchangesfalse 

\ifshowchanges
  \newcommand{\added}[1]{{\sethlcolor{green!25}\hl{#1}}}
  \newcommand{\deleted}[1]{\textcolor{red}{\sout{#1}}}
  \newcommand{\changed}[1]{{\sethlcolor{yellow!40}\hl{#1}}}
  \newcommand{\changedmath}[1]{\mathhighlight{yellow!40}{#1}}
  \newtcolorbox{addedbox}{breakable, colback=green!25, colframe=green!25, boxrule=0.4pt, arc=1pt, left=4pt, right=4pt, top=4pt, bottom=4pt}
  \newtcolorbox{changedbox}{breakable, colback=yellow!40, colframe=yellow!40, boxrule=0.4pt, arc=1pt, left=4pt, right=4pt, top=4pt, bottom=4pt}
\else
  \newcommand{\added}[1]{#1}
  \newcommand{\deleted}[1]{}
  \newcommand{\changed}[1]{#1}
  \newcommand{\changedmath}[1]{#1}
  \newenvironment{addedbox}{}{}
  \newenvironment{changedbox}{}{}
\fi

\begin{document}
\setcounter{section}{0}
\setcounter{subsection}{1}

\author{
  Ralf Hielscher\orcidlink{0000-0002-6342-1799}%
  \thanks{ralf.hielscher@math.tu-freiberg.de}~,~%
  Erik Wünsche \orcidlink{0000-0001-5178-6654}%
  \thanks{erik.wuensche@math.tu-freiberg.de}}

\maketitle
\begin{abstract}
  We analyze the Double Fourier Sphere (DFS) method on the rotation group $\SO3$ in the frequency domain and demonstrate its central role in fast algorithms.
  Fast Fourier algorithms on $\SO3$ are commonly formulated as a Wigner transform - mapping harmonic to Fourier coefficients - followed by a Fourier transform.
  We revisit this formulation and interpret the Wigner transform as an explicit realization of the DFS method, lifting functions from $\SO3$ to $\IT^{3}$. In this context, we analyze the Sobolev regularity loss induced by this lifting.
  Furthermore, we compare different Wigner transform implementations, examine additional symmetry enhancements, and observe that the direct method is often faster and more stable than the fast polynomial transform approaches.
\end{abstract}



\section{Introduction}

Functions on the rotation group $\SO3$ arise naturally across many areas of science and engineering. Prominent examples include robotics~\cite{Chirikjian2001} and computer vision~\cite{Makadia2003}, as well as protein docking in bioinformatics~\cite{Kovacs2003}. A particularly important application stems from crystallography in geology and materials science, where orientation density functions describe the distribution of crystal orientations in polycrystalline materials. Such density functions on $\SO3$ play a central role in understanding and predicting the macroscopic behavior of materials~\cite{Boogaart2007,Hielscher2010}.

From a numerical point of view, it is desirable to have efficient and accurate methods to approximate, manipulate, and transform such functions. The Matlab toolbox \texttt{MTEX} (Mathematical Texture Analysis)~\cite{Hielscher2007} provides a high-level framework for analyzing and visualizing functions on $\SO3$. In particular, the algorithms presented in this paper are implemented and validated within this framework.

Just as Fourier expansions are fundamental on the torus $\mathbb T^{3}$, they play an equally important role on $\SO3$. Consequently, functions on $\SO3$ are well suited for Fourier-type expansions and for the application of fast Fourier algorithms. Harmonic series expansions therefore provide a natural and convenient framework for numerical analysis on $\SO3$. The efficient evaluation of such expansions, and in particular the fast computation of the $\SO3$ Fourier transform, has motivated decades of research.

The idea to exploit this structure in crystallography goes back to Hans Joachim Bunge in 1969, who established the use of harmonic series expansions on $\SO3$ as a standard tool for texture analysis~\cite{Bunge1982}.
In this context, he introduced the concept of converting harmonic expansions of Wigner-D functions into ordinary Fourier series on the torus, by substituting the Fourier expansion of the Wigner-D functions.
This procedure is commonly referred to as the Wigner transform, since it computes the Fourier coefficients from the harmonic coefficients.
In a subsequent step, fast Fourier techniques can be applied to the resulting Fourier series. For instance, the nonequispaced fast Fourier transform (NFFT) enables efficient evaluation at arbitrary sample points.
Bunge’s realization of the Wigner transform had a computational complexity of $\complexity{N^{4}}$, where $N$ denotes the bandwidth of the harmonic series expansion. However, his work was limited by the computational resources of his time, which restricted practical computations to very low bandwidths.
The approach was revitalized in 1996 by Risbo~\cite{Risbo1996}, whose work is still frequently cited for establishing a practically feasible formulation of the Wigner transform.

Later, in 2008, Potts~\cite{Potts2009} and Kostelec~\cite{Kostelec2008} independently developed faster algorithms for the Wigner transform, reducing the complexity to $\complexity{N^{3}\log^{2} N}$ by employing fast polynomial transform techniques. While \changed{this} method improves the asymptotic complexity, it requires costly precomputations.
In this paper, we will demonstrate that, in practice, this algorithm tends to be slower and less stable for moderate bandwidths than Bunge's original direct Wigner algorithm.

In 2023, a different perspective was introduced by Mildenberger and Quellmalz~\cite{Mildenberger2023}.
They proposed a generalized Double Fourier Sphere (DFS) framework for approximating functions on certain classes of manifolds, including the rotation group $\SO3$, by means of Fourier series on the three-torus~$\IT^{3}$.
In this approach, a function $f\colon\SO3\to\IC$ is first transformed to a function $g=f\circ\phi_{\SO3}\colon\IT^3\to\IC$ via the Euler angle parametrization $\phi_{\SO3}\colon\IT^{3}\to\SO3$.
The transformed function is then approximated by a Fourier series on $\IT^{3}$.
Exploiting symmetry properties induced by the parametrization, Mildenberger and Quellmalz lifted a Fourier basis of a subspace of $\Lp{2}(\IT^{3})$ to $\L2SO3$.
Although the parametrization $\phi_{\SO3}$ is not a diffeomorphism, they analyzed how \changed{Hölder-smoothness} is preserved under the DFS method and established uniform convergence together with explicit approximation rates depending on the smoothness of the function.

In contrast, we start from the harmonic basis of $\L2SO3$ consisting of the Wigner-D functions and effectively express their pullback to $\IT^{3}$ with respect to the Fourier basis by describing the $\SO3$ DFS operator in Fourier space.
Thus, while previous work analyzed the DFS method primarily in the spatial domain and focused on smoothness preservation, our approach explicitly characterizes the transformation in frequency space.
We show, that the DFS method and the Wigner transform are essentially the same, one in the spatial domain and the other one in the frequency domain.

\begin{addedbox}
  This allows us to quantify the loss of regularity on the Sobolev scale.
  The counterexample in \Cref{lem:CounterExampleSobolev1/2} shows that the loss of regularity is at least $\frac12$.
  In \Cref{Cor:SobolevRegularityLoss}, we establish $\frac34$ as an upper threshold for the loss of regularity.
  Although we are not able to close this gap, numerical experiments indicate that the lower bound $\frac12$ might be sharp.
\end{addedbox}

This paper is organized as follows: In \Cref{sec:Preliminaries}, we introduce the framework of harmonic series on $\SO3$, establishing the notation and key concepts.
Building on this, \Cref{sec:DFS} investigates the DFS method on $\SO3$ in the frequency domain. We first consider band-limited functions and then extend the analysis to non-band-limited functions. In this context we determine the Sobolev regularity required for the DFS transform to lie in $\Lp{2}(\IT^{3})$.
In \Cref{sec:NFSOFT}, we exploit the $\SO3$-Fourier transform and its adjoint in the context of the Wigner transform.
We show how appropriate quadrature schemes allow for efficient computation of harmonic coefficients and demonstrate how common crystallographic symmetries can be incorporated to reduce both storage and computational cost of the Wigner transform.
Finally, \Cref{sec:Numerics} presents a numerical comparison of both algorithmic realizations of the Wigner transform, illustrating the practical trade-offs between efficiency and stability.


\section{Preliminaries}\label{sec:Preliminaries}

The special orthogonal group in $\IR^3$ also known as rotation group
\begin{equation*}
  \SO3 = \set{\mat R \in \IR^{3\times3} }{\mat R^T\mat R=\id \text{ and } \det(\mat R)=1}
\end{equation*}
is a three-dimensional Riemannian manifold endowed with the structure of a compact Lie group.
A rotation $\mat R_{\eta}(\omega) \in \SO3$ can be parametrized by a rotation axis $\eta\in\S{2}$ and a rotation angle $\omega \in \IT=\IR/(2\pi\IZ)$.
An equivalent representation is given in terms of Euler angles $\alpha\in\IT$, $\beta\in[0,\pi]$ and $\gamma\in\IT$, where
\begin{equation*}
  \mat R(\alpha,\beta,\gamma) = \mat R_z(\alpha) \, \mat R_y(\beta) \, \mat R_z(\gamma) \in\SO3.
\end{equation*}
Since $\SO3$ is a compact Lie group, it admits a unique, bi-invariant Haar measure $\mu$, normalized with respect to the total volume of $\SO3$.
In Euler angle coordinates, the measure reads
\[ \d{\mu\big(\mat R(\alpha,\beta,\gamma) \big)} = \frac1{8\pi^2}\sin\beta\d{\alpha}\d{\beta}\d{\gamma}. \]
This yields the Hilbert space $\L2SO3$ with inner product and norm
\begin{align*}
\scp{f}{g}_{\L2SO3} \coloneqq \int_{\SO3} f(\mat R) \, \conj{g(\mat R)} \d{\mu(\mat R)}, \quad  \|f\|_{\L2SO3} \coloneqq \sqrt{\scp{f}{f}_{\L2SO3}}
\end{align*}
for arbitrary $f,g\in\L2SO3$.

\begin{definition}
  Let $n \in \IN_0$ and $k,l \in \IZ$ with $|k|,|l| \leq n$.

  The $\Lp{2}$-normalized Wigner-D functions~$\WignerD{n}{k}{l} \colon \SO3 \to \IC$ of degree $n$ and orders $k,l$ are defined as
  \begin{equation*}
    \WignerD{n}{k}{l}(\mathbf{R}(\alpha,\beta,\gamma)) \coloneqq \sqrt{2n+1} \, \e^{-\i \, k \, \alpha} \, \Wignerd{n}{k}{l}(\cos\beta) \, \e^{-\i \, l \, \gamma},
  \end{equation*}
  see~\cite{Varshalovich1988}.
  Here, $\Wignerd{n}{k}{l} \colon [-1,1] \to \IR$ denotes the Wigner-d function, defined by
  \begin{equation*}
    \Wignerd{n}{k}{l}(x) \coloneqq (-1)^{\nu} \binom{2n-s}{s+a}^{\frac12} \binom{s+b}{b}^{-\frac12} \left(\frac{1-x}{2}\right)^{\frac{a}2} \left(\frac{1+x}{2}\right)^{\frac{b}2} P_s^{a,b}(x),
  \end{equation*}
  where $a=|k-l|$, $b=|k+l|$, $s=n-\max\{|k|,|l|\}$, and $\nu = \charFunc{k>l} \cdot(k+l)$.
  The function $P_s^{a,b}$ denotes the Jacobi polynomial of degree $s$ and parameters $a,b$, see~\cite{Szegoe1975}.
\end{definition}
Note that $\Wignerd{n}{k}{l}(\cdot)$ is a polynomial of degree $n$ if $k+l$ is even and $\sqrt{1-x^2}$ times a polynomial of degree $n-1$ otherwise.

Without the factor $\sqrt{2n+1}$, the Wigner-D functions correspond to the matrix elements of the irreducible unitary representations of $\SO3$, see~\cite{Vilenkin2012}, satisfying the representation property
\begin{equation}\label{eq:RepresentationProperty}
  \WignerD{n}{k}{l}(\mat R \mat Q) = \frac{1}{\sqrt{2n+1}} \sum_{j=-n}^n \WignerD{n}{k}{j}(\mat R) \, \WignerD{n}{j}{l}(\mat Q).
\end{equation}

The linear span of all Wigner-D functions with fixed harmonic degree $n\in\IN$ forms the harmonic subspace $\operatorname{Harm}_n(\SO3)$.
By the Peter-Weyl theorem, the collection of all Wigner-D functions constitutes a complete orthonormal basis of $\L2SO3$. Hence, every $f\in\L2SO3$ admits the unique harmonic expansion
\begin{equation*}
  f(\mat R) = \sum_{n=0}^{\infty}\sum_{k,l=-n}^n \fhat{n}{k}{l} \, \WignerD{n}{k}{l}(\mat R)
\end{equation*}
where $\fhat{n}{k}{l}  = \scp[\big]{f}{\WignerD{n}{k}{l}}_{\L2SO3}$ are the harmonic coefficients of $f$.
The space of $N$-band-limited functions is defined as
\begin{equation*}
  \bandlimFunc{N} = \bigoplus_{n=0}^{N} \operatorname{Harm}_n(\SO3)
\end{equation*}
which has dimension $\frac13(N+1)(2N+1)(2N+3)$ and corresponding index set
\[ \J{N} = \set[\big]{(n,k,l)}{ n=0,\dots,N \text{ and } k,l=-n,\dots,n}. \]


\section{The Double Fourier Sphere Method}\label{sec:DFS}

The classical Double Fourier Sphere (DFS) method \cite{Merilees1973,Orszag1974,Boyd1978,Yee1980} is based on the idea to transform a given function $f\colon\S{2}\to\IC$ onto the 2-dimensional torus $\IT^{2}$ by chaining it with the coordinate transform
\[ \phi_{\S2}\colon\IT^{2}\to\S{2}, \quad (\lambda,\theta)\mapsto(\cos\lambda\sin\theta,\sin\lambda\sin\theta,\cos\theta) \]
which double covers the sphere $\S2$, see~\cite{Mildenberger2022}. The transformed function $g(\lambda,\theta) = f(\phi_{\S2}(\lambda,\theta))$ can be expanded into a bivariate Fourier series, which enables quick Fourier methods.
However, because of double coverage, not every Fourier series on $\IT^{2}$ corresponds to a well-defined function on $\S2$.
Furthermore, the sphere's intrinsic curvature is lost in the mapping process, resulting in unavoidable distortions.
Due to these limitations, the approach of directly approximating spherical functions using Fourier series has not been established.

\subsection{The DFS Method on \texorpdfstring{$\SO3$}{SO(3)}}

In 2023 Mildenberger and Quellmalz \cite{Mildenberger2023} generalized the DFS method for a certain set of manifolds, in particular the rotation group $\SO3$.
The authors demonstrated that the Euler angle parameterization $\phi_{\SO3}\colon\IT^{3}\to\SO3$
enables us to represent rotational functions $f\colon\SO3\to\IC$ using functions on the torus $g\colon\IT^{3}\to\IC$ with $g = f\circ \phi_{\SO3}$, i.e.
\begin{align*}
\begin{tikzpicture}
\node (N1) at (0,0) {$\SO3$};
\node (N2) at (2.5,0) {$\IT^{3}$};
\node (N3) at (2.5,-2) {$\IC$};
\path[line width=0.8pt,->]
  (N2) edge node[above]{$\phi_{\SO3}$} (N1)
  (N2) edge node[right]{$g$} (N3)
  (N1) edge node[above]{} (N3);
\node at (1,-1.2) {$f$};
\end{tikzpicture}
\end{align*}
see~\cite[Sec. 6.5]{Mildenberger2023}.
More specifically, we have the following definition.
\begin{definition}\label{def:DFS-TrafoW}
  The DFS operator is a linear operator that maps rotational functions to their DFS transform by
  \[ \bm W \colon \C{}(\SO3) \to \C{}(\IT^{3}) \subset \Lp{2}(\IT^{3}) \]
  with $f\mapsto g=f\circ\phi_{\SO3}$.
\end{definition}

\begin{addedbox}
  In~\cite{Mildenberger2023}, the authors showed that Hölder regularity is preserved under the DFS operator. Furthermore, they analyzed the smoothness requirements on $f$ within the framework of Hölder spaces in order to obtain uniform convergence of the Fourier series of $\bm W(f)$.
  In contrast, we are more interested in characterizing how much Sobolev regularity of $f$ is necessary to ensure that $\bm W$ still maps it into $\Lp{2}(\IT^3)$.
  We will investigate the Sobolev regularity loss in detail in \Cref{sec:DFSSobolevSO3}.
\end{addedbox}
\deleted{In this context, we are ultimately interested in characterizing how much regularity of $f$ is necessary to ensure that $\bm W$ still maps it into $\Lp{2}(\IT^3)$.
In LEMMA 80 we provide an example showing that there exist functions in $\Sobolev{s}$  with $s<\tfrac12$ whose DFS transform does not belong to $\Lp{2}(\IT^3)$. This demonstrates that some regularity is lost under the DFS operator and, in particular, that it is not sufficient for $f$ to belong merely to $\Lp{2}(\SO3)$ or even to $\Sobolev{s}$ for $s<\frac12$.
In SECTION 3.30, we will show that, in fact, assuming $s>\frac34$ is sufficient to guarantee that $\bm W(f)\in \Lp{2}(\IT^3)$.}

\deleted{Furthermore, in [230,THM. 5.3] the authors analyzed the smoothness requirements on $f$ within the framework of Hölder spaces in order to obtain uniform convergence of the Fourier series of $\bm W(f)$.}

Due to the double coverage of $\SO3$, the parametrization satisfies
\[ {\phi_{\SO3}(\alpha,\beta,\gamma) = \phi_{\SO3}(\alpha+\pi,-\beta,\gamma+\pi)}, \]
and thus the transformed function exhibits a so-called block-mirror centrosymmetry (BMC) property, i.e.,
\[ g(\alpha,\beta,\gamma) = g(\alpha+\pi,-\beta,\gamma+\pi) \]
which directly transfers to its Fourier coefficients.
\begin{lemma}\label{lem:SymGhatDFSMethod}
  For $(k,j,l)\in\IZ^{3}$, the Fourier coefficients of the BMC function $g\in\Lp{2}(\IT^{3})$ satisfy
  $\ghat{k}{j}{l} = (-1)^{k+l}\,\ghat{k}{-j}{l}$.
\end{lemma}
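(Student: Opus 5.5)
We compute the Fourier coefficient directly and change variables in the defining integral. With the normalization
\[
  \ghat{k}{j}{l} = \frac{1}{(2\pi)^3}\int_{\IT^3} g(\alpha,\beta,\gamma)\,\e^{-\i(k\alpha+j\beta+l\gamma)}\d{\alpha}\d{\beta}\d{\gamma},
\]
we first replace $g(\alpha,\beta,\gamma)$ by $g(\alpha+\pi,-\beta,\gamma+\pi)$, which the BMC property allows. This identity holds pointwise for continuous $g$. For general $g\in\Lp{2}(\IT^3)$ it holds almost everywhere, which is all the integral needs.

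Next we substitute $\alpha' = \alpha+\pi$, $\beta' = -\beta$, $\gamma' = \gamma+\pi$. This map is a measure-preserving bijection of $\IT^3$: translations in $\alpha$ and $\gamma$, and the reflection $\beta\mapsto-\beta$ on $\IT$. The domain of integration and the Lebesgue measure are therefore unchanged. Under the substitution the exponential becomes
\[
  \e^{-\i(k(\alpha'-\pi) - j\beta' + l(\gamma'-\pi))} = \e^{\i\pi(k+l)}\,\e^{-\i(k\alpha' + (-j)\beta' + l\gamma')} = (-1)^{k+l}\,\e^{-\i(k\alpha' + (-j)\beta' + l\gamma')}.
\]
Reading off the remaining integral as the Fourier coefficient with index $(k,-j,l)$ gives $\ghat{k}{j}{l} = (-1)^{k+l}\ghat{k}{-j}{l}$.

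There is no substantial obstacle. The only points requiring care are the sign bookkeeping, where $\e^{\i\pi(k+l)}=(-1)^{k+l}$ because $k,l\in\IZ$, and noting that the reflection in $\beta$ preserves the Haar measure on $\IT$, so no Jacobian or orientation sign appears.
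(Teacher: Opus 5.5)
Your proof is correct and follows essentially the same route as the paper. Both combine the measure-preserving substitution $(\alpha,\beta,\gamma)\mapsto(\alpha+\pi,-\beta,\gamma+\pi)$ on $\IT^{3}$ with the BMC identity and pull out the phase $\e^{\i\pi(k+l)}=(-1)^{k+l}$; the only differences are cosmetic (you apply the BMC identity before substituting, and you include a normalization factor that does not affect the identity).
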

\begin{proof}
  It holds
  \begin{align*}
    \ghat{k}{j}{l}
    &= \int_{\IT^{3}} g(\alpha,\beta,\gamma)\,\e^{-\i\,(k,j,l)\cdot(\alpha,\beta,\gamma)^{\top}} \d{(\alpha,\beta,\gamma)}\\
    &= \int_{\IT^{3}} g(\alpha+\pi,-\beta,\gamma+\pi)\,\e^{-\i\,(k,j,l)\cdot(\alpha+\pi,-\beta,\gamma+\pi)^{\top}} \d{(\alpha,\beta,\gamma)}\\
    &= \e^{-\i\,\pi\,(k+l)} \cdot \int_{\IT^{3}} g(\alpha,\beta,\gamma)\,\e^{-\i\,(k,-j,l)\cdot(\alpha,\beta,\gamma)^{\top}} \d{(\alpha,\beta,\gamma)}
    = (-1)^{k+l} \, \ghat{k}{-j}{l}.
    \end{align*}
\end{proof}
We shall make use of this symmetry in \Cref{sec:NFSOFT} to accelerate our algorithms.

In the following chapters we analyze the DFS operator $\bm W$ in Fourier space.

\subsection{The DFS Method for Band-Limited Functions on \texorpdfstring{$\SO3$}{SO(3)}}\label{sec:DFSBandlimitedSO3}

Let $f\in\bandlimFunc{N}$ be band-limited with bandwidth $N\in\IN$, i.e. it has a unique harmonic representation
\begin{align}\label{eq:BandLimitedHarmonicSeries}
  f(\mat R) = \sum_{n=0}^{N}\sum_{k,l=-n}^{n} \fhat{n}{k}{l}\,D_{n}^{k,l}(\mat R).
\end{align}

In the next theorem, we study the Fourier representation of the band-limited DFS operator $\bm W_{N}$, which maps $f$ to an element of
\[ \changedmath{\trigonometricPolynomials{N}} = \linspan \set*{ \e^{\i \, \bm m \, \cdot \bm x^{\top}} }{\bm m\in\IZ^{3}, \norm{\bm m}_{\infty}\leq N}, \qquad \bm x\in\IT^{3},\]
the space of all trigonometric polynomials of degree at most $N$.
\begin{theorem}
  Let $\mathcal F_{\IT^{3}}\colon\ell_{2}(\IZ^{3})\to\Lp{2}(\IT^{3})$ and $\mathcal F_{\SO3}\colon \ell_{2}(\J{\infty})\to\L2SO3$ denote the Fourier transforms on $\IT^{3}$ and $\SO3$, respectively.
  Then, the band-limited DFS operator $\bm W_{N} \colon \bandlimFunc{N} \to \changedmath{\trigonometricPolynomials{N}}$ reads as
  \[ \bm W_{N} = \mathcal F_{\mathbb T^{3}} \, \coeffTrafo{N} \, \mathcal F^{-1}_{\SO3}, \]
  where the linear operator $\coeffTrafo{N}$ is defined by its matrix vector product
  \begin{align}\label{eq:WignerCoeffTrafo}
    \big( \coeffTrafo{N} \, \fhat \big)_{k,j,l}  \changedmath{\coloneqq}  \i^{k-l} \hspace{-15pt}  \sum_{n=\max\{|k|,|l|,|j|\}}^N \hspace{-15pt} \sqrt{2n+1} \, \Wignerd{n}{j}{k}(0) \, \Wignerd{n}{j}{l}(0) \, \fhat{n}{-k}{-l}
  \end{align}
  for all $k,j,l = -N,\dots,N$.
\end{theorem}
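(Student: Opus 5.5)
The plan is to compute the pullback $\bm W(D_n^{k,l}) = D_n^{k,l}\circ\phi_{\SO3}$ of a single Wigner-D function explicitly as a trigonometric polynomial on $\IT^{3}$. The only nontrivial part is the $\beta$-dependence $\Wignerd{n}{k}{l}(\cos\beta)$, whose Fourier expansion in $\beta$ comes from the representation property \eqref{eq:RepresentationProperty}. Linearity of $\bm W_N$ then gives the matrix $\coeffTrafo{N}$ by reading off Fourier coefficients of \eqref{eq:BandLimitedHarmonicSeries} term by term. Since $f\in\bandlimFunc{N}$ is a finite sum, no convergence issues arise, and $\mathcal F_{\SO3}^{-1}$ simply extracts $\fhat{n}{k}{l}$.

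The key step is the classical factorization $\mat R_y(\beta) = \mat R_z(-\tfrac{\pi}{2})\,\mat R_x(\beta)\,\mat R_z(\tfrac{\pi}{2})$, written more usefully as $\mat R_y(\beta)=\mat R(\tfrac\pi2,\tfrac\pi2,\cdot)\,\mat R_z(\beta)\,\mat R(\cdot,\tfrac\pi2,\cdot)$. Concretely, I would write $\mat R_y(\beta)=\mat Q_1\,\mat R_z(\beta)\,\mat Q_2$ with $\mat Q_1=\mat R(\alpha_1,\tfrac\pi2,\gamma_1)$ and $\mat Q_2=\mat R(\alpha_2,\tfrac\pi2,\gamma_2)$, choosing the angles $\pm\tfrac\pi2$ so that the identity holds; this is checked by direct multiplication. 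Applying \eqref{eq:RepresentationProperty} twice gives
\[ \Wignerd{n}{k}{l}(\cos\beta) = \sum_{j=-n}^{n} c_{k}\,\Wignerd{n}{k}{j}(0)\,\e^{-\i j\beta}\,\Wignerd{n}{j}{l}(0)\,c_l', \]
where $c_k,c_l'$ are unimodular phases $\i^{\pm k},\i^{\pm l}$ coming from the $z$-rotation parts of $\mat Q_1,\mat Q_2$. The factor $\sqrt{2n+1}$ cancels appropriately with the normalization of $D$.

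Next, the symmetry relations of the Wigner-d functions, namely $\Wignerd{n}{k}{j}=(-1)^{k-j}\Wignerd{n}{j}{k}$ and the behaviour under $k\mapsto -k$, rewrite $\Wignerd{n}{k}{j}(0)$ as $\Wignerd{n}{j}{k}(0)$ up to sign. Inserting this into $D_n^{k,l}(\mat R(\alpha,\beta,\gamma))=\sqrt{2n+1}\,\e^{-\i k\alpha}\Wignerd{n}{k}{l}(\cos\beta)\e^{-\i l\gamma}$ yields a trigonometric polynomial with frequencies $(-k,-j,-l)$ and $|j|\le n$. Summing over $n$ and reindexing $(k,j,l)\mapsto(-k,-j,-l)$ then produces the coefficient of $\e^{\i(k\alpha+j\beta+l\gamma)}$ as the stated sum over $n\ge\max\{|k|,|j|,|l|\}$ with $\fhat{n}{-k}{-l}$. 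Any residual sign in $j$ is absorbed using $\Wignerd{n}{-j}{k}(0)\Wignerd{n}{-j}{l}(0)=(-1)^{k+l}\Wignerd{n}{j}{k}(0)\Wignerd{n}{j}{l}(0)$, consistent with \Cref{lem:SymGhatDFSMethod}. This also shows the image lies in $\trigonometricPolynomials{N}$.

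The main obstacle is the bookkeeping of phases: getting exactly $\i^{k-l}$ together with the reflected indices requires fixing the correct factorization angles and the sign conventions of $\Wignerd{n}{k}{l}$ (including $\nu$ in the definition). I would first verify the resulting formula for $n=1$ against explicit $d^1$ entries. Any constant sign discrepancy would then be traced to the choice of $\mat Q_1,\mat Q_2$ and fixed by switching to the alternative factorization with $-\tfrac\pi2$.
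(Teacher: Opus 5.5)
Your route is the paper's: factor $\mat R_y(\beta)$ through two rotations with middle Euler angle $\tfrac\pi2$ and a $z$-rotation by $\beta$, apply \eqref{eq:RepresentationProperty}, symmetrize the values $\Wignerd{n}{k}{j}(0)$, and substitute into the finite series. The paper uses $\mat R_y(\beta)=\mat R(\tfrac\pi2,\tfrac\pi2,\pi)\,\mat R(\beta,\tfrac\pi2,\tfrac\pi2)$ with a single application of the representation property. This is your $\mat Q_1\,\mat R_z(\beta)\,\mat Q_2$ with $\mat R_z(\beta)$ absorbed into the second factor.

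However, your displayed Fourier expansion is wrong as stated, and the error is not one of the ``constant signs'' you plan to fix afterwards. Write $\mat Q_1=\mat R(\alpha_1,\tfrac\pi2,\gamma_1)$ and $\mat Q_2=\mat R(\alpha_2,\tfrac\pi2,\gamma_2)$. The inner angles then contribute the $j$-dependent phase $\e^{-\i j(\gamma_1+\alpha_2)}$, not only the phases $c_k,c_l'$. Moreover, $\gamma_1+\alpha_2\equiv\pi$ is forced. At $\beta=0$ the product must be the identity, and $\mat R_y(\tfrac\pi2)\,\mat R_z(\theta)\,\mat R_y(\tfrac\pi2)$ is a rotation about the $z$-axis only for $\theta\equiv\pi$. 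With the paper's outer angles $\alpha_1=\gamma_2=\tfrac\pi2$, the correct identity is
\[
\Wignerd{n}{k}{l}(\cos\beta)=\i^{-k-l}\sum_{j=-n}^{n}(-1)^{j}\,\Wignerd{n}{k}{j}(0)\,\Wignerd{n}{j}{l}(0)\,\e^{-\i j\beta}.
\]

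This factor $(-1)^j$ exactly cancels the $j$-part of the sign in $\Wignerd{n}{k}{j}(0)=(-1)^{k+j}\Wignerd{n}{j}{k}(0)$. That gives $\i^{k-l}\sum_j\Wignerd{n}{j}{k}(0)\Wignerd{n}{j}{l}(0)\e^{-\i j\beta}$. Your reflection relation under $j\mapsto-j$ then yields $\Wignerd{n}{k}{l}(\cos\beta)=\i^{l-k}\sum_j\Wignerd{n}{j}{k}(0)\Wignerd{n}{j}{l}(0)\e^{\i j\beta}$. Your final reindexing $(k,j,l)\mapsto(-k,-j,-l)$ then produces precisely $\i^{k-l}$ and $\fhat{n}{-k}{-l}$.

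Without this factor you are left with a residual $(-1)^j$, and none of your remedies can remove it:
\begin{itemize}
\item the reflection relation only generates the $j$-independent sign $(-1)^{k+l}$;
\item switching to $-\tfrac\pi2$ only replaces $\beta$ by $-\beta$;
\item whereas $(-1)^j$ amounts to $\beta\mapsto\beta+\pi$, i.e.\ $x\mapsto-x$.
\end{itemize}
Your $n=1$ test makes this visible: $\sum_j\Wignerd{1}{1}{j}(0)\Wignerd{1}{j}{1}(0)\e^{-\i j\beta}=\tfrac{\cos\beta-1}{2}$. This is not a scalar multiple of $\Wignerd{1}{1}{1}(\cos\beta)=\tfrac{1+\cos\beta}{2}$, so no choice of $c_k,c_l'$ and no global sign can fix it.

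A minor, harmless slip: the ``classical'' identity you quote gives $\mat R_y(-\beta)$; it should read $\mat R_z(\tfrac\pi2)\,\mat R_x(\beta)\,\mat R_z(-\tfrac\pi2)$. This only flips $j$.

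Keep the inner phase, and the proof closes exactly as in the paper.
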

\begin{proof}
  Insertion of the specific choice of rotations
  $\mat R_y(\beta) = \mat R(\tfrac{\pi}2,\tfrac{\pi}2,\pi)\,\mat R(\beta,\tfrac{\pi}2,\tfrac{\pi}2)$
  in the representation property from \cref{eq:RepresentationProperty} and using the symmetry property
  $\Wignerd{n}{k}{j}(0) = (-1)^{k+j} \, \Wignerd{n}{j}{k}(0) = (-1)^{n+j} \, \Wignerd{n}{-j}{k}(0)$ (see~\cite{Varshalovich1988}) yields
  \begin{align}\label{eq:PolynomTrafo}
    \Wignerd{n}{k}{l}(\cos\beta) =\i^{l-k} \,  \sum_{j=-n}^n \Wignerd{n}{j}{k}(0) \, \Wignerd{n}{j}{l}(0) \,\e^{\i\,j\,\beta}
  \end{align}
  as Fourier expansion of the Wigner-d functions, see~\cite{Bunge1982}.

  Now, let $f\in\bandlimFunc{N}$ with harmonic coefficient vector $\fhat \coloneqq \mathcal F^{-1}_{\SO3}(f)$ be given.
  Then it yields
  \[ \bm W(f) = f(\mat R(\alpha,\beta,\gamma)) = \sum_{n=0}^{N}\sum_{k,l=-n}^{n} \fhat{n}{k}{l}\,D_{n}^{k,l}(\mat R(\alpha,\beta,\gamma)). \]
  By substituting the product approach of the Wigner-D functions and changing the order of summation we obtain
  \begin{align*}
    \bm W(f) =  \sum_{k,l=-N}^N \e^{\i\,(k,l)\cdot(\alpha,\gamma)^{\top}} \,\sum_{n=\max\set{\abs{k},\abs{l}}{}}^N \sqrt{2n+1}\,\fhat{n}{-k}{-l}\,\Wignerd{n}{-k}{-l}(\cos\beta)
  \end{align*}
  Now the Fourier expansion \eqref{eq:PolynomTrafo} from the first part of the proof implies
  \begin{align*}
    \bm W(f) = \sum_{k,j,l=-N}^{N} \braces*{\,\i^{k-l} \hspace{-20pt}  \sum_{n=\max\{|k|,|l|,|j|\}}^N \hspace{-20pt} \sqrt{2n+1}\, \fhat{n}{-k}{-l}\, \Wignerd{n}{j}{k}(0) \, \Wignerd{n}{j}{l}(0) } \, \e^{\i\,(k,j,l)\cdot(\alpha,\beta,\gamma)^{\top}}.
  \end{align*}
\end{proof}

Note that the linear operator $\coeffTrafo{N}$ represents the DFS operator in the frequency domain. We will call it Wigner transform, since it transforms harmonic (Wigner-D) coefficients into Fourier coefficients.
The transforms of the previous theorem can be visualized as follows:
\begin{align*}
  \begin{tikzpicture}
    \node (N1) at (0,0) {$\bandlimFunc{N}$};
    \node (N2) at (3,0) {\changed{$\trigonometricPolynomials{N}$}};
    \node (N3) at (0,-2) {$\IC^{\abs{\J{N}}}$};
    \node (N4) at (3,-2) {$\IC^{(2N+1)^{3}}$};
    \path[line width=0.8pt,->]
    (N1) edge node[above]{$\bm W_{N}$} (N2)
    (N3) edge node[above]{$\coeffTrafo{N}$} (N4)
    (N1) edge node[left]{$\mathcal F^{-1}_{\SO3}$} (N3)
    (N4) edge node[right]{$\mathcal F_{\IT^{3}}$} (N2);
  \end{tikzpicture}
\end{align*}

\begin{remark}\label{rem:WignerTrafoPotts}
  Transforming a function on $\SO3$ into a function on $\IT^{3}$ is the key idea behind all~\cite{Potts2009,Kostelec2008} fast algorithms for estimating the harmonic series~\eqref{eq:BandLimitedHarmonicSeries}.
  In \cite{Potts2009}, the authors implemented the Wigner transform $\coeffTrafo{N}$ via a fast polynomial transform.
  \added{In particular, they perform a change of polynomial basis to the Chebyshev polynomials $T_{n}(\cos\beta)=\cos(n\beta)$, yielding}
  \begin{equation*}
    \sum_{n=\max\set{\abs{k},\abs{l}}{}}^N\sqrt{2n+1}\,\fhat{n}{k}{l}\,\Wignerd{n}{k}{l}(\cos\beta) =
    \begin{cases}
      \ds\sum_{n=0}^{N} \harmCoeff{h}{n}{k}{l} \, T_{n}(\cos\beta), & \text{if } k+l \text{ even},\\
      \ds\sin(\beta)\cdot\sum_{n=0}^{N-1} \harmCoeff{h}{n}{k}{l} \, T_{n}(\cos\beta), & \text{if } k+l \text{ odd},
    \end{cases}
  \end{equation*}
  for any $k,l=-N,\dots,N$, \added{where $\harmCoeff{h}{n}{k}{l}$ are the Chebyshev coefficients.}
  A second change of basis easily transforms the cosine series into a suitable Fourier series.

  Consequently, this scheme provides a concrete realization of the Wigner transform $\coeffTrafo{N}$, and thus represents another realization of the band-limited DFS operator $\bm W_{N}$ in Fourier space.
\end{remark}

In \Cref{sec:NFSOFT}, we will discuss a direct implementation of $\coeffTrafo{N}$ based on \Cref{eq:WignerCoeffTrafo} and compare it with the fast approach proposed in \cite{Potts2009}.


\subsection{The DFS-Method for Non-Band-Limited Functions on \texorpdfstring{$\SO3$}{SO(3)}}\label{sec:DFSSobolevSO3}

In the previous section, we analyzed the DFS operator in the frequency domain for band-limited functions on $\SO3$.
We now extend this framework to non-band-limited functions in $\L2SO3$ that possess sufficient regularity.
Since our analysis is carried out in Fourier space, we focus in particular on the decay behavior of the harmonic coefficients, which is most naturally characterized using Sobolev spaces.

In what follows, we show that the Fourier representation of the non-band-limited DFS operator $\bm{W}$ coincides with the Wigner transform $\coeffTrafo{N}$ as $N$ grows to infinity. In this context, we examine the Sobolev regularity required to ensure that the range of $\bm W$ is contained in $\Lp{2}(\IT^{3})$.

\begin{definition}
  Let $s \geq 0$.
  The Sobolev space of harmonic coefficients $\SobolevInd{s}$ is defined as the set of all vectors $\fhat = (\fhat{n}{k}{l})_{(n,k,l)\in\J{\infty}}$ for which $\norm{\fhat}_{\SobolevInd{s}} < \infty$,
  where the norm is induced by the inner product
  \[ \scp{\fhat}{\ghat}_{\SobolevInd{s}} \coloneqq \sum_{(n,k,l)\in\J{\infty}} (1+n(n+1))^{s} \, \fhat{n}{k}{l} \, \conj{\harmCoeff{g}{n}{k}{l}} ,\qquad \text{for } \fhat,\ghat\in\SobolevInd{s}.\]
  Accordingly, the Sobolev space $\Sobolev{s}$ is defined by
  \[ \Sobolev{s} \coloneqq \set*{ f \in \L2SO3 }{ \fhat \in \SobolevInd{s} }.\]
\end{definition}

\begin{remark}
  By the Sobolev embedding theorem, we have $\Sobolev{s} \hookrightarrow \C{}(\SO3)$ for $s>\frac32$.
  Consequently, the DFS operator is well defined on $\Sobolev{s}$ and induces the continuous embedding
  \[ \bm W \colon \Sobolev{s} \to \Lp{2}(\IT^{3}), \qquad s>\tfrac32.\]
\end{remark}

\begin{addedbox}
This raises the question: How much Sobolev regularity is necessary to ensure that $\bm W$ still maps into $\Lp{2}(\IT^{3})$?

In the rest of this section, we will show that the smallest possible Sobolev index $s$ lies in the interval $\left[\frac12,\frac34+\varepsilon\right]$ for every $\varepsilon>0$.
The next example establishes the lower bound $s\geq\frac12$, while the upper bound $s>\frac34$ is proved at the end of this section.
\end{addedbox}

\deleted{In the remainder of this section, we refine the Sobolev regularity condition introduced above and examine the smoothness required for the DFS operator to map rotational functions to functions on $\IT^3$.
The following lemma illustrates the loss of regularity that may occur under this mapping.}

\begin{lemma}\label{lem:CounterExampleSobolev1/2}
  Let
  \begin{align*}
  f \colon \SO3 \to \IC, ~ f(\mat R(\alpha,\beta,\gamma)) =
    \begin{cases}
      \frac1{\sqrt{\sin \beta}}, & \text{if } \beta\notin \set{0,\pi}{}, \\
      0, & \text{otherwise}.
    \end{cases}
  \end{align*}
  Then $f\in\Sobolev{s}$ for $s<\frac12$ and $f\circ\phi_{\SO3} \notin \Lp{2}(\IT^{3})$.
\end{lemma}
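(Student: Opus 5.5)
The second claim is a direct computation on the torus. Since $f\circ\phi_{\SO3}(\alpha,\beta,\gamma)=|\sin\beta|^{-1/2}$ almost everywhere, we get
\[
\int_{\IT^3}|f\circ\phi_{\SO3}|^2 \d{(\alpha,\beta,\gamma)} = 4\pi^2\int_{\IT}\frac{1}{|\sin\beta|}\d{\beta}.
\]
Near $\beta=0$ we have $|\sin\beta|\sim|\beta|$, so the integrand behaves like $1/|\beta|$ and the integral diverges logarithmically. Hence $f\circ\phi_{\SO3}\notin\Lp{2}(\IT^3)$.

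For the first claim, note that $f$ does not depend on $\alpha$ or $\gamma$. Its only nonzero harmonic coefficients are therefore $\fhat{n}{0}{0}$. Since $\Wignerd{n}{0}{0}=P_n$ is the Legendre polynomial, we get
\[
\fhat{n}{0}{0}=\frac{\sqrt{2n+1}}{2}\int_0^\pi (\sin\beta)^{1/2}P_n(\cos\beta)\d{\beta}
=\frac{\sqrt{2n+1}}{2}\int_{-1}^{1}(1-x^2)^{-1/4}P_n(x)\d{x}.
\]
(The original measure carries the Haar weight $\sin\beta$; combined with $f=(\sin\beta)^{-1/2}$ this gives the factor $(\sin\beta)^{1/2}$ above.) In particular $f\in\L2SO3$, because $\int (\sin\beta)^{-1}\sin\beta\d{\beta}<\infty$. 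So the task reduces to showing
\[
\sum_n (1+n(n+1))^s\,|\fhat{n}{0}{0}|^2<\infty\qquad\text{for } s<\tfrac12 ,
\]
which needs a decay bound for the Legendre coefficients of $w(x)=(1-x^2)^{-1/4}$.

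The main step, and the expected obstacle, is showing $\int_{-1}^1(1-x^2)^{-1/4}P_n(x)\d{x}=\mathcal O(n^{-1/2})$. I would try two routes. The first is the closed form: Legendre moments of $(1-x^2)^{\lambda}$ are known (Gradshteyn–Ryzhik), namely for even $n=2m$ the integral is proportional to
\[
\frac{\Gamma(m-\lambda)\,\Gamma(\lambda+1)^{\cdots}}{\Gamma(-\lambda)\,\Gamma(m+\lambda+3/2)}\sim m^{-2\lambda-3/2}.
\]
With $\lambda=-1/4$ this gives $\mathcal O(n^{-1})$, and odd $n$ give zero by symmetry. The second route avoids the exact formula. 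The endpoint singularity $(1-x)^{-1/4}$ dominates, and the Darboux/Hilb asymptotics for Legendre coefficients of $(1-x)^{\lambda}$ show they decay like $n^{-2\lambda-1}$; multiplying by the normalization $\sqrt{2n+1}$ gives $|\fhat{n}{0}{0}|\sim n^{-2\lambda-1/2}=n^{0}\cdot n^{-1}\cdot\ldots$. I would settle the exponent through the closed form. In either case I expect $|\fhat{n}{0}{0}|^2\sim n^{-2}$, so the series $\sum n^{2s-2}$ converges exactly when $s<\tfrac12$, as required.

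A robust alternative that sidesteps the coefficient asymptotics is to interpolate. First check that $f\in\Lp{2}(\SO3)$, as above. Then show $f\in H^{1/2-\varepsilon}$ via a Gagliardo-type fractional norm in $\beta$ with respect to the weight $\sin\beta$, comparing with the one-dimensional fact that $|t|^{-1/2}\cdot|t|^{1/2}$-weighted functions behave like $|x|^{0}$ in three dimensions near the singular axis. Concretely, near the singular set $\beta=0$ the function looks like $r^{-1/2}$, where $r$ is the distance to a one-dimensional curve in the 3-manifold. Locally this is $r^{-1/2}$ in a two-dimensional transverse plane, and $r^{-a}$ in $\IR^2$ lies in $H^{s}$ iff $a+s<1$, giving $s<\tfrac12$. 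This local characterization is the fallback if the explicit moment formula proves awkward.
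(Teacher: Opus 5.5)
Your reduction is the paper's own: only $\hat f_n^{0,0}$ survives, $\hat f_n^{0,0}=\frac{\sqrt{2n+1}}{2}I_n$ with $I_n=\int_{-1}^1(1-x^2)^{-1/4}P_n(x)\,\mathrm dx$, and the decay of $I_n$ comes from the Gradshteyn--Ryzhik closed form (7.132.1) plus Stirling. The non-$L^2$ part on the torus is also fine. The gap is in the one step that carries the content of the lemma, the exponent of $n$. You get it wrong at every explicit stage.

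\begin{itemize}
\item \emph{Your stated target is too weak.} You aim for $I_n=\mathcal O(n^{-1/2})$, but that only gives $|\hat f_n^{0,0}|^2=\frac{2n+1}{4}I_n^2=\mathcal O(1)$. What is needed is $I_n=\mathcal O(n^{-3/2})$.
\item \emph{Your closed form drops a factor.} From 7.132.1 and the reflection formula one gets, for $-1<\lambda<0$,
\[
\int_{-1}^{1}(1-x^2)^{\lambda}P_{2m}(x)\,\mathrm dx=\frac{\Gamma(\lambda+1)\,\Gamma(m-\lambda)\,\Gamma(m+\frac12)}{\Gamma(-\lambda)\,\Gamma(m+\lambda+\frac32)\,\Gamma(m+1)}\sim\frac{\Gamma(\lambda+1)}{\Gamma(-\lambda)}\,m^{-2\lambda-2}.
\]
As a sanity check, $\lambda=-\frac12$ gives $\pi P_{2m}(0)^2\sim 1/m$. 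The factor $\Gamma(m+\frac12)/\Gamma(m+1)\sim m^{-1/2}$ that you lost is exactly the difference between your $m^{-2\lambda-3/2}$ and the true rate. With your rate, $\lambda=-\frac14$ gives $I_n\sim n^{-1}$, hence $|\hat f_n^{0,0}|^2\sim n^{-1}$. Then $\sum_n(1+n(n+1))^s n^{-1}$ converges only for $s<0$, and the lemma would not follow.
\item \emph{The Darboux route applies the normalization the wrong way.} There $n^{-2\lambda-1}$ is the correct rate for the Legendre expansion coefficients $a_n=(n+\frac12)\int_{-1}^1 wP_n$. But $\hat f_n^{0,0}=a_n/\sqrt{2n+1}$, so you must divide by $\sqrt{2n+1}$, not multiply. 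Multiplying is what produced your $n^{0}$.
\end{itemize}

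Done correctly, $\lambda=-\frac14$ gives $I_n\sim c\,n^{-3/2}$. The paper's precise form is $I_n=\frac{2\Gamma(3/4)^2\xi_n}{\pi}\cdot\frac{1}{\sqrt n\,(n+\frac12)}$ with $\xi_n\in(\frac78,1)$. Hence $|\hat f_n^{0,0}|^2<\frac{1}{2n(n+\frac12)}$, and the Sobolev sum converges for $s<\frac12$. Your expectation $|\hat f_n^{0,0}|^2\sim n^{-2}$ is correct, but as written it contradicts your own computations rather than following from them.

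Your fallback is a genuinely different and viable idea. Write $f(R)=F(Re_z)$ with $F(x)=(1-x_3^2)^{-1/4}$ on the sphere; then $F$ behaves like $r^{-1/2}$ near a codimension-two set, and $r^{-a}\chi\in H^s(\mathbb R^2)$ iff $a+s<1$. This would even indicate that $\frac12$ is sharp for this $f$. However, it needs the equivalence of the spectral Sobolev norm on $\mathrm{SO}(3)$ with a chart-based (or spherical) one, which you only gesture at. As it stands it is a sketch, not a proof.
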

\begin{proof}
  It is immediate that $f\circ\phi_{\SO3} \notin \Lp{2}(\IT^{3})$, since
  \begin{align*}
    \int_{\IT^{3}} \abs{f(\mat R(\alpha,\beta,\gamma))}^{2} \d{\mat R}
    &= \frac1{2\pi} \int_{0}^{2\pi} \frac1{\abs{\sin\beta}} \d{\beta}
    = \frac1{\pi} \int_{0}^{\pi} \frac1{\sin\beta} \d{\beta}
    = \frac2{\pi} \lim_{a\to0+} \int_{a}^{\frac\pi2} \frac1{\sin\beta} \d{\beta} \\
    &= \frac2{\pi} \lim_{a\to0+} \ln\abs*{\tan\frac\beta2} \Big|_{a}^{\frac\pi2}
    = - \lim_{a\to0+} \frac2\pi \ln\braces{\tan\frac{a}2} = \infty.
  \end{align*}
  It is also straightforward to see that $f\in\L2SO3$.
  It remains to show that the vector of harmonic coefficients $\fhat$ belongs to $\SobolevInd{s}$.
  By definition of the harmonic coefficients, we have
  \begin{align*}
    \fhat{n}{k}{l} = \delta_{k,0}\cdot\delta_{l,0}\cdot\frac{\sqrt{2n+1}}2 \cdot \int_{0}^{\pi} \sqrt{\sin\beta} \, \Wignerd{n}{k}{l}(\cos\beta) \d{\beta}.
  \end{align*}
  Hence, $\fhat{n}{k}{l}=0$ if $k\neq0$ or $l\neq0$.
  Using $\Wignerd{n}{0}{0}(\cos\beta) = P_{n}(\cos\beta)$, where $P_{n}$ denotes the Legendre polynomial of degree $n$, and making the substitution $t=\cos\beta$ with ${\d{t}=-\sin\beta\d{\beta}}$, it follows that
  \begin{align*}
    \fhat{n}{0}{0} = \frac{\sqrt{2n+1}}2 \cdot \underbrace{ \int_{-1}^{1} P_{n}(t)\cdot \frac1{\sqrt[4]{1-t^{2}}} \d{t} }_{\eqqcolon I}.
  \end{align*}
  By \cite[(7.132.1)]{Gradshteyn1980}, for $n>0$ we obtain
  \begin{align*}
    I = \frac{\pi\,\Gamma(\frac34)^{2}} {\Gamma(\frac{n}2+\frac54) \, \Gamma(\frac34-\frac{n}2) \, \Gamma(\frac{n}2+1) \, \Gamma(\frac12-\frac{n}2)}
  \end{align*}
  Using $\Gamma(\frac12-\frac{n}2) = \frac{(-1)^{n/2}\,2^{n}\,\sqrt{\pi}\,(\frac{n}2)!}{n!}$ and $\Gamma(\frac{n}2+\frac54)\,\Gamma(\frac34-\frac{n}2) = (-1)^{n/2}\,\sqrt{2}\,\pi\,(\frac{n}2+\frac14)$ Stirling's formula yields
  \begin{align*}
    I = \frac{\Gamma(\frac34)^{2}\sqrt{2}}{\sqrt{\pi}} \cdot \frac1{(n+\frac12)}\cdot\frac1{2^{n}}\,\binom{n}{\frac{n}2} = \frac{2\,\Gamma(\frac34)^{2}\, \xi_{n}}{\pi} \cdot \frac1{\sqrt{n}\,(n+\frac12)}
  \end{align*}
  for some $\xi_{n}\in(\frac78,1)$.
  Hence,
  \[\norm{f}^{2}_{\Sobolev{s}} = \sum_{n=0}^{\infty} (1+n\,(n+1))^{s} \, \abs*{\fhat{n}{0}{0}}^{2} < \frac65 + \frac12 \sum_{n=1}^{\infty} \frac{(1+n\,(n+1))^{s}}{n\,(n+\frac12)}, \]
  which converges for $s<\frac12$.
\end{proof}

Now we prove a lemma on the asymptotic behavior of series involving Wigner-d functions.
This result plays a crucial role in analyzing the Wigner transform $\coeffTrafo{N}$ as
$N$ grows to infinity.
\begin{lemma}\label{lem:seriesEstimate}
  Let $k,l\in\IZ$ and \changed{$s>\frac34$}. Then there exists a constant $C>0$, independent of $k$ and $l$, such that
  \begin{align}\label{eq:seriesEstimate}
    \sum_{n=\max{\set{\abs{k},\abs{l}}{}}}^{\infty} \frac1{(2n+1)^{2s-1}} \sum_{j=-n}^{n} \abs{\Wignerd{n}{j}{k}(0)\Wignerd{n}{j}{l}(0)}^{2} < C
  \end{align}
  for all $k,l\in\IZ$.
\end{lemma}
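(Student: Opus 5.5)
The plan is to bound the inner sum over $j$ uniformly in $k$ and $l$ by a negative power of $n$. Then the outer series is dominated by a $p$-series whose convergence is exactly the condition $s>\frac34$. Since all terms are nonnegative, I will first enlarge the range of summation to all $n\geq 0$. The lower limit $\max\{|k|,|l|\}$ only removes terms (recall $\Wignerd{n}{j}{k}\equiv 0$ unless $|j|,|k|\le n$), so any bound for the full series is automatically independent of $k$ and $l$.

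First, I estimate the inner sum by pulling out a supremum and using unitarity. The Wigner-d matrix $(\Wignerd{n}{j}{l}(\cos\beta))_{j,l}$ is the matrix of the unitary representation $D^n(\mat R_y(\beta))$ with the normalization $\sqrt{2n+1}$ removed. Hence its columns are orthonormal, and in particular $\sum_{j=-n}^n |\Wignerd{n}{j}{l}(0)|^2 = 1$ for every $l$. This gives
\[
\sum_{j=-n}^{n} \abs{\Wignerd{n}{j}{k}(0)\,\Wignerd{n}{j}{l}(0)}^{2} \le \max_{|j|,|k|\le n} \abs{\Wignerd{n}{j}{k}(0)}^{2}.
\]
The problem is thus reduced to a uniform sup-norm bound for the Wigner-d functions at the equator $\beta=\frac\pi2$.

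Second, and this is the main obstacle, I need a bound of the form $\abs{\Wignerd{n}{j}{k}(0)} \le C\,(2n+1)^{-1/4}$ with $C$ independent of $n,j,k$. I would not derive this from scratch. Instead I would invoke the Haagerup–Schlichtkrull inequality for Jacobi polynomials, which states that the suitably normalized functions $(\sin\frac\beta2)^{a+\frac12}(\cos\frac\beta2)^{b+\frac12}P_s^{a,b}(\cos\beta)$ are bounded by a universal constant times $(2s+a+b+1)^{-1/4}$. Expressed through the definition of $\Wignerd{n}{k}{l}$ given above, where $2s+a+b=2n$ up to the sign conventions, this reads $(\sin\beta)^{1/2}\abs{\Wignerd{n}{k}{l}(\cos\beta)}\le C\,(2n+1)^{-1/4}$ uniformly in $k,l$. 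At $\beta=\frac\pi2$ the weight equals one, which yields the desired bound. The care needed here is in checking that the normalization constants $\binom{2n-s}{s+a}^{1/2}\binom{s+b}{b}^{-1/2}$ of the definition match those in the Haagerup–Schlichtkrull statement. The exponent $\frac14$ is sharp, since the extremal entries near the turning points behave like Airy functions. This sharpness is precisely what produces the threshold $\frac34$ rather than $\frac12$.

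Finally, I combine the two steps. For $n\ge1$ each summand of \eqref{eq:seriesEstimate} is at most $C^2\,(2n+1)^{1-2s}(2n+1)^{-1/2} = C^2\,(2n+1)^{\frac12-2s}$, and the $n=0$ term is bounded by $1$. The series $\sum_n (2n+1)^{\frac12-2s}$ converges if and only if $2s-\frac12>1$, that is, if and only if $s>\frac34$. This gives a constant depending only on $s$, as claimed.
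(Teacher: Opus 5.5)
Your proof is correct, but it reaches the key bound on the inner sum by a different and shorter route than the paper.

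\textbf{The paper's route.} It applies Parseval's identity to the Fourier expansion \eqref{eq:PolynomTrafo} and rewrites the inner sum as $I_n=\frac1\pi\int_{-1}^{1}\abs{\Wignerd{n}{k}{l}(x)}^2(1-x^2)^{-1/2}\,dx$. It then splits $[-1,1]$ into two boundary layers of width $\frac1n$, where only $\abs{\Wignerd{n}{k}{l}}\le 1$ is used, and a bulk region, where the Haagerup--Schlichtkrull bound $(1-x^2)^{1/4}\abs{\Wignerd{n}{k}{l}(x)}\le 12(2n+1)^{-1/4}$ is inserted. Integrating $(1-x^2)^{-1}$ over the bulk costs a logarithm, so the result is $I_n\le 151\ln(2n+1)/\sqrt{2n+1}$.

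\textbf{Your route.} You use orthonormality of the columns of the Wigner-d matrix to reduce $I_n$ to $\max_j\abs{\Wignerd{n}{j}{k}(0)}^2$. You then apply the same Haagerup--Schlichtkrull inequality only at the equator $x=0$, where the weight equals $1$. This gives $I_n\le 144/\sqrt{2n+1}$ uniformly in $k,l$, with no logarithm, no domain decomposition, and the same threshold $s>\frac34$. Both arguments rest on the same external inequality, so yours is simply the more economical way to reach $\frac34$.

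\textbf{What the paper's route buys.} It gives an exact representation of $I_n$ as a weighted $L^2$ norm of a single Wigner-d function, and this retains more information. For fixed $k,l$ that integral is only of order $\ln n/n$. Your intermediate quantity $\max_j\abs{\Wignerd{n}{j}{k}(0)}^2$ is of order $n^{-1/2}$ already for $k=0$: take $j=n$, where $\Wignerd{n}{n}{0}(0)^2=\binom{2n}{n}4^{-n}$. So the integral form is the natural starting point for any attempt to close the gap towards $\frac12$.

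Accordingly, your closing remark that the sharpness of the $n^{-1/4}$ bound ``produces'' the threshold $\frac34$ should be read as a statement about sup-norm methods. It is not evidence that the lemma fails for $s\le\frac34$.
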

\begin{addedbox}
\begin{proof}
  Using the Fourier expansion of $\Wignerd{n}{k}{l}\circ\cos$, given in \Cref{eq:PolynomTrafo}, the Fourier coefficients satisfy
  \begin{equation*}
    \widehat{\bigl(\Wignerd{n}{k}{l}\circ\cos\bigr)}_{j} =
    \begin{cases}
      \i^{l-k} \Wignerd{n}{j}{k}(0)\Wignerd{n}{j}{l}(0), & \text{if } \abs{j}\leq n,\\
      0, & \text{otherwise},
    \end{cases}
  \end{equation*}
  with respect to the $\Lp{2}(\IT)$ inner product.
  Hence, by Parseval's identity,
  \[ I_{n} \coloneqq \sum_{j=-n}^{n} \abs{\Wignerd{n}{j}{k}(0)\Wignerd{n}{j}{l}(0)}^{2} = \frac1{2\pi}\int_{-\pi}^{\pi}\abs*{\Wignerd{n}{k}{l}(\cos\beta)}^{2}\d\beta. \]
  Since $\Wignerd{n}{k}{l}(\cos(-\beta))=(-1)^{k-l}\Wignerd{n}{k}{l}(\cos\beta)$ (\cite[Sec.~4.4]{Varshalovich1988}), the integrand is even. Hence, after substitution, we  obtain
  \[ I_{n} = \frac1\pi \int_{-1}^{1} \frac{\abs{\Wignerd{n}{k}{l}(x)}^{2}}{\sqrt{1-x^{2}}} \d{x}. \]
  We decompose the integration domain into three regions
  \[ R_{n}^{1}=(-1,-1+\tfrac1n), \quad R_{n}^{2}=(-1+\tfrac1n,1-\tfrac1n), \quad R_{n}^{3}=(1-\tfrac1n,1) \]
  and denote the corresponding integrals by $I_{n}=I_{n}^{1}+I_{n}^{2}+I_{n}^{3}$.

  Throughout, we assume $n>1$.

  \medskip
  \noindent
  \underline{Boundary Regions:} For $I_{n}^{1}$ and $I_{n}^{3}$ we use the bound $\abs{\Wignerd{n}{k}{l}(x)}\leq1$, which holds since the Wigner-d functions are matrix elements of the unitary representation $\WignerD{n}{k}{l}(\mat R_{\vec y}(\beta))$. Hence
  \[ I_{n}^{1} \leq \int_{-1}^{-1+\frac1n} \frac1{\sqrt{1-x^{2}}}\d{x} \leq \int_{-1}^{-1+\frac1n} \frac1{\sqrt{1+x}}\d{x} = \frac2{\sqrt{n}}. \]
  An analogous bound holds for $I_{n}^{3}$.

  \medskip
  \noindent
  \underline{Bulk Region:} For $I_{n}^{2}$ we use the uniform upper bound from~\cite{Haagerup2013},
  \[ (1-x^{2})^{\frac14} \, \abs{\Wignerd{n}{k}{l}(x)} \leq 12 \,(2n+1)^{-\frac14}, \]
 which implies
  \[  I_{n}^{2} \leq \frac{144}{\sqrt{2n+1}} \, \int_{-1+\frac1n}^{1-\frac1n} \frac1{1-x^{2}} \d{x} = \frac{144\,\ln(2n-1)}{\sqrt{2n+1}}. \]

  Combining the three estimates, we conclude that $I_{n}\leq\frac{151\,\ln(2n+1)}{\sqrt{2n+1}}$ for $n>1$.
  Assuming $s>\frac34$ we obtain
  \begin{align*}
    \sum_{n=\max{\set{\abs{k},\abs{l}}{}}}^{\infty} \frac1{(2n+1)^{2s-1}} \underbrace{\sum_{j=-n}^{n} \abs{\Wignerd{n}{j}{k}(0)\Wignerd{n}{j}{l}(0)}^{2}}_{I_{n}}
    &\leq \sum_{n=\max{\set{\abs{k},\abs{l}}{}}}^{\infty} \frac{151 \, \ln(2n+1)}{(2n+1)^{2s-\frac12}}.
  \end{align*}
\end{proof}
\end{addedbox}
\begin{changedbox}
Note that in~\cite{Haagerup2013} it is stated that this uniform upper bound is optimal as $\abs{k},\abs{l}$ tend to infinity.
If $k$ and $l$ are fixed, one obtains an upper bound of order $\mathcal{O}(n^{-\frac12})$ using the asymptotics of the Wigner-D functions, cf.~\cite{Varshalovich1988}. However, this bound is not uniform in $k$ and $l$.
\end{changedbox}

We use \Cref{lem:seriesEstimate} to extend the Wigner transform $\coeffTrafo{N}$ to the Sobolev space $\SobolevInd{s}$ in the non-band-limited setting.
\begin{theorem}\label{thm:WOperatorBounded}
  Let \changed{$s>\frac34$}.
  The Wigner transform $\coeffTrafo{N}$ from \Cref{thm:WOperatorBounded} extends to a bounded linear operator $\coeffTrafo\colon\SobolevInd{s}\to\ell_{2}(\IZ^{3})$, obtained as the pointwise limit of
  $\coeffTrafo{N}$ acting on the projections of $\fhat$ onto the first $\abs{\J{N}}$ modes.

  The operator is explicitly given by
  \[ \big( \coeffTrafo \, \fhat \big)_{k,j,l}   =  \i^{k-l} \hspace{-20pt}  \sum_{n=\max\{|k|,|l|,|j|\}}^{\infty} \hspace{-20pt} \sqrt{2n+1} \, \Wignerd{n}{j}{k}(0) \, \Wignerd{n}{j}{l}(0) \, \changedmath{\fhat{n}{-k}{-l}} \]
  for $(k,j,l)\in \IZ^{3}$.
\end{theorem}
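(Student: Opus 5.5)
Fix $(k,j,l)\in\IZ^3$ and apply Cauchy--Schwarz to the defining series, splitting the weight $(2n+1)^{s}$ between the two factors. Since $1+n(n+1)\geq \tfrac14(2n+1)^2$, we have
\begin{align*}
  \sum_{n\geq n_0} \sqrt{2n+1}\,\abs{\Wignerd{n}{j}{k}(0)\Wignerd{n}{j}{l}(0)}\,\abs{\fhat{n}{-k}{-l}}
  \leq \Bigl(\sum_{n\geq n_0} \tfrac{\abs{\Wignerd{n}{j}{k}(0)\Wignerd{n}{j}{l}(0)}^{2}}{(2n+1)^{2s-1}}\Bigr)^{\frac12}
  \Bigl(\sum_{n\geq n_0} (2n+1)^{2s}\abs{\fhat{n}{-k}{-l}}^{2}\Bigr)^{\frac12},
\end{align*}
where $n_0=\max\{|k|,|l|,|j|\}$. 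The first factor is finite by \Cref{lem:seriesEstimate}, because the single $j$-term is bounded by the full $j$-sum. The second factor is at most $2^{s}\norm{\fhat}_{\SobolevInd{s}}$. Hence the series converges absolutely. Consequently, the pointwise limit of $\coeffTrafo{N}$ applied to the projections $P_N\fhat$ exists, because for $N\geq n_0$ the $(k,j,l)$-entry of $\coeffTrafo{N}P_N\fhat$ is exactly the partial sum up to $N$. This identifies the limit with the stated formula.

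For the $\ell_2$ bound, I would keep $k$ and $l$ fixed and sum the squared Cauchy--Schwarz bound over $j$, using the weighted form
\begin{align*}
  \abs{(\coeffTrafo\fhat)_{k,j,l}}^{2} \leq \Bigl(\sum_{n\geq n_0} \tfrac{\abs{\Wignerd{n}{j}{k}(0)\Wignerd{n}{j}{l}(0)}^{2}}{(2n+1)^{2s-1}}\Bigr)\Bigl(\sum_{n\geq \max\{|k|,|l|\}} (2n+1)^{2s}\abs{\fhat{n}{-k}{-l}}^{2}\Bigr).
\end{align*}
The second factor does not depend on $j$. Summing the first factor over $j$, where for each $n$ only $|j|\leq n$ contributes, and interchanging the order of summation gives exactly the left-hand side of \eqref{eq:seriesEstimate}. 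This is bounded by $C$, uniformly in $k$ and $l$. Therefore
\[
  \sum_{j}\abs{(\coeffTrafo\fhat)_{k,j,l}}^{2}\leq C\,2^{2s}\sum_{n}(1+n(n+1))^{s}\abs{\fhat{n}{-k}{-l}}^{2}.
\]
Summing over $(k,l)\in\IZ^2$ yields $\norm{\coeffTrafo\fhat}_{\ell_2}^2\leq C\,4^{s}\norm{\fhat}_{\SobolevInd{s}}^2$. Linearity of the limit operator is clear.

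The main obstacle is this middle step: the Cauchy--Schwarz split must leave a weight that is summable in both $n$ and $j$ uniformly in $k$ and $l$. That is exactly what \Cref{lem:seriesEstimate}, with its $j$-sum inside, is designed for, so the uniformity of $C$ is the crucial point.

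One could also note that $\coeffTrafo{N}P_N\fhat\to\coeffTrafo\fhat$ in $\ell_2$. This follows by applying the same bound to $\fhat-P_N\fhat$, since the entries of $\coeffTrafo{N}P_N\fhat$ for $\norm{(k,j,l)}_\infty\le N$ equal those of $\coeffTrafo P_N\fhat$. The statement itself, however, only requires the pointwise limit.
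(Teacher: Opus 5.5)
Your proposal is correct and follows essentially the paper's argument: a weighted Cauchy--Schwarz split, enlarging the coefficient sum so it no longer depends on $j$, summing over $j$ to invoke \Cref{lem:seriesEstimate} uniformly in $(k,l)$, and then summing over $(k,l)$. The differences are cosmetic: you weight by $(2n+1)^{2s}$ at the cost of a factor $4^{s}$, you check absolute convergence of each entry explicitly (the paper leaves this implicit), and you obtain the $\ell_2$-convergence by applying the bound to $\hat f - P_N\hat f$ rather than by estimating the tail directly.
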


\begin{proof}
  First we show, that $\coeffTrafo{N}$ converges pointwise to $\coeffTrafo$ in $\ell_{2}(\IZ^{3})$.

  By Cauchy-Schwarz inequality, we obtain
  \begin{align*}
    &\norm*{ \coeffTrafo \fhat - \coeffTrafo_{N} \fhat  }_{\ell_{2}(\IZ^{3})}^{2} = \sum_{(k,j,l)\in\IZ^{3}} \abs*{ \sum_{n=\max\{|k|,|j|,|l|,N\}}^{\infty} \hspace{-20pt} \sqrt{2n+1}\,\Wignerd{n}{j}{k}(0)\,\Wignerd{n}{j}{l}(0)\,\changedmath{\fhat{n}{-k}{-l}} }^{2} \\
    &\leq \sum_{(k,j,l)\in\IZ^{3}} \braces*{  \sum_{n=\max\{|k|,|j|,|l|,N\}}^{\infty} \hspace{-20pt} \tfrac{2n+1}{(1+n\,(n+1))^{s}} \, \abs*{\Wignerd{n}{j}{k}(0) \, \Wignerd{n}{j}{l}(0) }^{2} } \cdot \braces*{ \sum_{n=\max\{|k|,|j|,|l|,N\}}^{\infty} \hspace{-20pt} (1+n\,(n+1))^{s} \, \changedmath{\abs{\fhat{n}{-k}{-l}}^{2}} }\\
    &\leq \sum_{(k,l)\in\IZ^{2}} \braces*{ \sum_{j\in\IZ} \sum_{n=\max\{|k|,|j|,|l|\}}^{\infty}  \hspace{-20pt} \tfrac{2n+1}{(1+n\,(n+1))^{s}} \,\abs*{\Wignerd{n}{j}{k}(0)\,\Wignerd{n}{j}{l}(0)}^{2} } \cdot \braces*{ \sum_{n=\max\{|k|,|l|,N\}}^{\infty}  \hspace{-20pt} (1+n(n+1))^{s} \, \changedmath{\abs{\fhat{n}{-k}{-l}}^{2}} }.
  \end{align*}
  By \Cref{lem:seriesEstimate}, the term in the first braces is uniformly bounded by a constant $C$ independent of $k$ and $l$.
  Changing the order of summation yields
  \begin{align*}
    \norm*{\coeffTrafo\fhat - \coeffTrafo_{N} \fhat}^{2}_{\ell_{2}(\IZ^{3})} \leq C \cdot \sum_{n=N}^{\infty} (1+n\,(n+1))^{s} \,\sum_{k,l=-n}^{n}  \abs{\fhat{n}{k}{l}}^{2}.
  \end{align*}
  For any fixed $\fhat\in\SobolevInd{s}$, the right-hand side tends to zero as $N\to\infty$. Hence, $\coeffTrafo{N}$ converges pointwise to $\coeffTrafo$.

  Analogously, one obtains
  \[\norm{\coeffTrafo\fhat}^{2}_{\ell_{2}(\IZ^{3})} \leq C \cdot \norm{\fhat}_{\SobolevInd{s}}^{2},\]
  that is, $\coeffTrafo$ is a bounded operator from $\SobolevInd{s}$ to $\ell_{2}(\IZ^{3})$.
\end{proof}

This theorem immediately yields the Fourier-space representation of the DFS operator.
\begin{corollary}
  Let \changed{$s>\frac34$} and let $\mathcal F_{\IT^{3}}\colon\ell_{2}(\IZ^{3})\to\Lp{2}(\IT^{3})$ and $\mathcal F_{\SO3}\colon \ell_{2}(\J{\infty})\to\L2SO3$ be the Fourier transforms on $\IT^{3}$ and $\SO3$, respectively.
  Then the DFS operator from \Cref{def:DFS-TrafoW}, \changed{reads as}
  \begin{addedbox}
    \[ \bm W\colon\Sobolev{s}\to\Lp{2}(\IT^{3}), \qquad \bm W = \mathcal F_{\mathbb T^{3}} \, \coeffTrafo \, \mathcal F^{-1}_{\SO3}, \]
  \end{addedbox}
  where the linear operator $\coeffTrafo$ is defined in \Cref{thm:WOperatorBounded},
  i.e.
  \begin{align*}
  \begin{tikzpicture}
    \node (N1) at (0,0) {$\Sobolev{s}$};
    \node (N2) at (3,0) {$\Lp{2}(\IT^{3})$};
    \node (N3) at (0,-2) {$\SobolevInd{s}$};
    \node (N4) at (3,-2) {$\ell_{2}(\IZ^{3})$};
    \path[line width=0.8pt,->]
    (N1) edge node[above]{$\bm W$} (N2)
    (N3) edge node[above]{$\coeffTrafo$} (N4)
    (N1) edge node[left]{$\mathcal F^{-1}_{\SO3}$} (N3)
    (N4) edge node[right]{$\mathcal F_{\IT^{3}}$} (N2);
  \end{tikzpicture}
  \end{align*}
\end{corollary}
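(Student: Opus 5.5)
The plan is to combine three facts: the band-limited identity $\bm W_N = \mathcal F_{\IT^3}\coeffTrafo{N}\mathcal F^{-1}_{\SO3}$ from the band-limited theorem, the boundedness and pointwise convergence of $\coeffTrafo{N}$ to $\coeffTrafo$ from \Cref{thm:WOperatorBounded}, and a density/closure argument. Since $\bm W$ is initially defined only on $\C{}(\SO3)$, the statement has two parts. First, $\bm W$ agrees with $\mathcal F_{\IT^{3}}\coeffTrafo\mathcal F^{-1}_{\SO3}$ on $\Sobolev{s}\cap\C{}(\SO3)$. Second, this composition is the unique continuous extension of $\bm W$ to $\Sobolev{s}$, which is how we interpret $\bm W$ on all of $\Sobolev{s}$.

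For $f\in\Sobolev{s}$, write $f_N\in\bandlimFunc{N}$ for the orthogonal projection, so that $\mathcal F^{-1}_{\SO3}f_N$ is the truncation of $\fhat$ to $\J{N}$.
\begin{itemize}
\item By the band-limited theorem, $\bm W f_N = \mathcal F_{\IT^3}\coeffTrafo{N}\mathcal F^{-1}_{\SO3} f_N$. Since $\coeffTrafo{N}$ only sees the modes $n\le N$, this equals $\mathcal F_{\IT^3}\coeffTrafo{N}\fhat$ in the sense used in \Cref{thm:WOperatorBounded}.
\item By \Cref{thm:WOperatorBounded}, $\coeffTrafo{N}\fhat\to\coeffTrafo\fhat$ in $\ell_2(\IZ^3)$. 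Because $\mathcal F_{\IT^3}$ is unitary (Parseval), $\bm W f_N\to \mathcal F_{\IT^3}\coeffTrafo\fhat$ in $\Lp{2}(\IT^3)$.
\item The bound $\norm{\coeffTrafo\fhat}_{\ell_2}\le \sqrt C\norm{\fhat}_{\SobolevInd{s}}$ shows that the composed operator is bounded from $\Sobolev{s}$ to $\Lp{2}(\IT^3)$.
\item Since band-limited functions are dense in $\Sobolev{s}$, this bounded operator is the unique continuous extension of $\bm W|_{\bigcup_N\bandlimFunc{N}}$.
\end{itemize}

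The step I expect to be the main obstacle is consistency with the original pointwise definition $g=f\circ\phi_{\SO3}$ when $f\in\Sobolev{s}$ is continuous but does not satisfy $s>\frac32$. Here $f_N\to f$ only in $\Sobolev{s}$, not necessarily uniformly, so the convergence $f_N\circ\phi_{\SO3}\to f\circ\phi_{\SO3}$ has to be obtained another way. I would first try a subsequence argument. On the one hand, $f_N\to f$ in $\L2SO3$, so a subsequence converges $\mu$-almost everywhere. Because $\phi_{\SO3}$ pulls $\mu$-null sets back to Lebesgue null sets (the Jacobian $\sin\beta$ vanishes only on a null set), the composed subsequence converges a.e.\ on $\IT^3$ to $f\circ\phi_{\SO3}$. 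On the other hand, the same subsequence converges in $\Lp{2}(\IT^3)$ to $\mathcal F_{\IT^3}\coeffTrafo\fhat$, so a further subsequence converges a.e.\ to that limit. The two limits therefore coincide almost everywhere, which gives $\bm W f=\mathcal F_{\IT^3}\coeffTrafo\mathcal F^{-1}_{\SO3}f$ in $\Lp{2}(\IT^3)$.

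The same a.e.\ argument also covers general (possibly discontinuous) $f\in\Sobolev{s}$ if $\bm W f$ is read as $f\circ\phi_{\SO3}$ for an a.e.-defined $f$. This removes any dependence on continuity and completes the identification.
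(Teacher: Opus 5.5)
Your proposal is correct. It follows the same overall line as the paper but proves more than the paper writes down.

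The paper's proof only observes $\fhat\in\SobolevInd{s}$, invokes \Cref{thm:WOperatorBounded} to obtain $\ghat=\coeffTrafo\fhat\in\ell_{2}(\IZ^{3})$, and then asserts that the harmonic series of $f$ in Euler angles coincides with the Fourier series with coefficients $\ghat$. It does not explain why the resulting $\Lp{2}(\IT^{3})$ function is $f\circ\phi_{\SO3}$. Nor does it say what $\bm W f$ means when $\frac34<s\le\frac32$, where $f$ need not lie in $\C{}(\SO3)$, the domain used in \Cref{def:DFS-TrafoW}.

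You supply exactly these missing steps. You obtain the series identity as the $\Lp{2}(\IT^{3})$-limit of the band-limited factorization applied to the truncations $f_N$, using the strong convergence $\coeffTrafo{N}\fhat\to\coeffTrafo\fhat$ from the proof of \Cref{thm:WOperatorBounded}. You then identify this limit with $f\circ\phi_{\SO3}$ by the double-subsequence a.e. argument. That argument is legitimate because the Haar density $\sin\beta$ vanishes only where $\beta\in\{0,\pi\}$, so $\phi_{\SO3}$ pulls $\mu$-null sets back to Lebesgue null sets.

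The paper's version buys brevity. Yours gives an actual proof that $\mathcal F_{\IT^{3}}\,\coeffTrafo\,\mathcal F^{-1}_{\SO3}$ is the DFS operator, both as the unique continuous extension from band-limited functions and as the almost-everywhere composition with $\phi_{\SO3}$.
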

\begin{proof}
  Let $f\in \Sobolev{s}$. Then by definition of the Sobolev space $\Sobolev{s}$ the harmonic coefficient vector $\fhat =\mathcal F_{\SO3}^{-1} f$ is in $\SobolevInd{s}$.
  By \Cref{thm:WOperatorBounded} the operator $\coeffTrafo$ maps the harmonic coefficient vector $\fhat$ to the Fourier coefficient vector $\ghat\in\ell_{2}(\IZ^{3})$ where
  \[ \sum_{(n,k,l)\in\J{\infty}} \fhat{n}{k}{l}\,\WignerD{n}{k}{l}(\mat R(\alpha,\beta,\gamma)) = \sum_{(k,j,l)\in\IZ^{3}} \ghat{k}{j}{l}\,\e^{-\i\,(k,j,l)\cdot(\alpha,\beta,\gamma)^{\top}}. \]
  The Fourier transform $\mathcal F_{\IT^{3}}$ of $\ghat$ yields the corresponding Fourier series $g = \mathcal F_{\IT^{3}}\ghat \in\Lp{2}(\IT^{3})$, with
  \[g(\vec x) = \sum_{(k,j,l)\in\IZ^{3}} \ghat{k}{j}{l}\,\e^{-1\i(k,j,l)\cdot\vec x^{\top}}.\]
\end{proof}

\begin{addedbox}
Similarly, we obtain the following corollary, which quantifies the Sobolev regularity loss under the DFS operator.
\begin{corollary}\label{Cor:SobolevRegularityLoss}
  Let $t\geq0$. The DFS operator $\bm W$ restricts to a bounded linear operator
  \[ \bm W\colon\Sobolev{t+s}\to\Sobolev{t}{\IT^{3}} \]
  for all $s>\frac34$.
\end{corollary}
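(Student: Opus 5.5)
The plan is to work entirely in frequency space and reduce the statement to the argument of \Cref{thm:WOperatorBounded}, with Sobolev weights inserted. On $\IT^{3}$ we use the norm $\norm{g}_{\Sobolev{t}{\IT^{3}}}^{2}=\sum_{(k,j,l)\in\IZ^{3}}(1+\norm{(k,j,l)}^{2})^{t}\abs{\ghat{k}{j}{l}}^{2}$; any equivalent choice, for instance with $\norm{\cdot}_{\infty}$, only changes constants.

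The key observation is that $(\coeffTrafo\fhat)_{k,j,l}$ only involves coefficients $\fhat{n}{-k}{-l}$ with $n\geq\max\{\abs{k},\abs{j},\abs{l}\}$. For every $n$ appearing in the sum we therefore have
\[
1+k^{2}+j^{2}+l^{2}\leq 1+3n^{2}\leq 3\,(1+n(n+1)).
\]
Consequently
\[
(1+\norm{(k,j,l)}^{2})^{t/2}\abs{(\coeffTrafo\fhat)_{k,j,l}}\leq 3^{t/2}\sum_{n\geq\max\{\abs{k},\abs{j},\abs{l}\}}\sqrt{2n+1}\,\abs{\Wignerd{n}{j}{k}(0)\Wignerd{n}{j}{l}(0)}\,(1+n(n+1))^{t/2}\abs{\fhat{n}{-k}{-l}}.
\]
The right-hand side is the expression bounded in the proof of \Cref{thm:WOperatorBounded}, applied to the vector $\tilde f_{n}^{k,l}=(1+n(n+1))^{t/2}\fhat{n}{k}{l}$. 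Moving the weight inside the sum is the only new step, and it works because the weight factor is monotone in $n$ along the range of summation. This monotonicity is the point to check most carefully; the rest is routine.

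Next we repeat the Cauchy--Schwarz argument. We split the summand as $\bigl(\tfrac{\sqrt{2n+1}}{(1+n(n+1))^{s/2}}\abs{d\,d}\bigr)\cdot\bigl((1+n(n+1))^{(s+t)/2}\abs{\fhat{n}{-k}{-l}}\bigr)$, where $d\,d$ abbreviates $\Wignerd{n}{j}{k}(0)\Wignerd{n}{j}{l}(0)$. We then sum over $j$ and apply \Cref{lem:seriesEstimate}, which gives a constant $C$ uniform in $k,l$ because $s>\frac34$; this uses $(1+n(n+1))^{s}\geq c\,(2n+1)^{2s}$. Finally we interchange the order of summation. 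The result is
\[
\norm{\coeffTrafo\fhat}^{2}_{\Sobolev{t}{\IT^{3}}}\leq 3^{t}\,C\,\norm{\fhat}^{2}_{\SobolevInd{s+t}}.
\]

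It remains to identify this with $\bm W$. For $f\in\Sobolev{t+s}\subset\Sobolev{s}$, the preceding corollary gives $\bm W f=\mathcal F_{\IT^{3}}\coeffTrafo\mathcal F_{\SO3}^{-1}f$. Since $\norm{f}_{\Sobolev{t+s}}=\norm{\fhat}_{\SobolevInd{t+s}}$, the estimate above yields boundedness of $\bm W\colon\Sobolev{t+s}\to\Sobolev{t}{\IT^{3}}$. Linearity is inherited from $\coeffTrafo$.
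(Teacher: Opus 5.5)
Your proposal is correct and follows the argument the paper has in mind when it says the corollary is obtained ``similarly'': every $n$ in the sum for index $(k,j,l)$ satisfies $n\geq\max\{|k|,|j|,|l|\}$, so the torus weight $(1+\|(k,j,l)\|^{2})^{t}$ is bounded by $3^{t}(1+n(n+1))^{t}$ and can be moved onto the harmonic coefficients, after which the Cauchy--Schwarz argument of \Cref{thm:WOperatorBounded} together with \Cref{lem:seriesEstimate} goes through unchanged. What you actually use is this pointwise comparison for each $n$ in the range of summation, which you have already proved; the appeal to ``monotonicity'' is just another way of stating it.
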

Note that this bound is not sharp.
\end{addedbox}


\section{Fast Algorithms for Harmonic Series on \texorpdfstring{$\SO3$}{SO(3)}}\label{sec:NFSOFT}

In the previous chapter, we studied the DFS operator $\bm W$, which maps rotational functions to functions on the torus.
Its representation in Fourier space led to the Wigner transform $\coeffTrafo{N}$, which converts harmonic series on $\SO3$ into Fourier series on $\IT^{3}$.
This, in turn, allows us to analyze fast algorithms on $\SO3$ by mapping the problem back to the torus and employing fast Fourier methods there.

In this chapter, we investigate efficient algorithms for the nonequispaced $\SO3$-Fourier transform (NSOFT), which enables the evaluation of band-limited harmonic series at arbitrary rotations, as well as its adjoint on suitable quadrature grids, leading to efficient inversion schemes.
Furthermore, we study how symmetry properties of functions on $\SO3$ can be exploited to accelerate computations.

\subsection{Factorization of the \texorpdfstring{$\SO3$}{SO(3)}-Fourier Transform}\label{sec:NSOFTOperators}

The $\SO3$-Fourier transform is a linear operator, that evaluates a band-limited harmonic series $f\in\bandlimFunc{N}$, as defined in \cref{eq:BandLimitedHarmonicSeries}, at arbitrary rotations $\mathcal R = \set{\mat R_{1},\dots,\mat R_{M}}{}\subset\SO3$.
It can be expressed as the matrix-vector product
\[ \bm f = \bm D_{\mathcal R,N} \, \fhat, \]
where
\[ \fhat = \big(\fhat{n}{k}{l}\big)_{(n,k,l)\in\J{N}} \in \IC^{\abs{\J{N}}} \]
denotes the vector of harmonic coefficients,
\[ \bm f = \big(f(\mat R_m)\big)_{m=1}^M \in \IC^M \]
is the vector of function values, and
\[ \bm D_{\mathcal R,N} = \big( \WignerD{n}{k}{l}(\mat R_m) \big)_{m\in\set{1,\dots,M}{} ,\, (n,k,l)\in\J{N}} \in \IC^{M \times \abs{\J{N}}} \]
is the nonequispaced $\SO3$-Fourier matrix (Wigner-D matrix).

Using the results of the previous chapter, we obtain the factorization
\[ \bm D_{\mathcal R,N} = \FourierTrafo{\mathcal R,N} \, \coeffTrafo{N}, \]
where $\coeffTrafo{N}$ denotes the Wigner transform, see \Cref{def:DFS-TrafoW} and
\begin{equation}\label{eq:FourierMatrix}
  \FourierTrafo{\mathcal R,N} = \left( \e^{-\i\,(k,j,l)\cdot(\alpha_{m},\beta_{m},\gamma_{m})^{\top}} \right)_{m\in\{1,\dots,M\};~(k,j,l)\in\{-N,\dots,N\}^{3}}
\end{equation}
is the Fourier transform, which can be computed efficiently using the nonequispaced fast Fourier transform (NFFT), see~\cite{Potts2001}.

The adjoint $\SO3$-Fourier transform reads as
\[ \bm D^{H}_{\mathcal{R},N} = \coeffTrafo{N}^{H}\,\FourierTrafo{\mathcal{R},N}^{H}, \]
where the adjoint Wigner transform is defined in the following lemma.

\begin{lemma}\label{lem:AdjointCoeffTrafo}
  Let $N\in\IN$ and $\ghat =\big( \ghat{k}{j}{l} \big)_{k,j,l=-N}^N \in \IC^{(2N+1)^3}$ be given. Then we have
  \begin{align}\label{eq:AdjointCoeffTrafoRisbo}
    \big( \coeffTrafo{N}^H \, \ghat \big)_n^{k,l} = \sqrt{2n+1}\,\i^{l-k} \sum_{j=-n}^n \Wignerd{n}{j}{k}(0) \, \Wignerd{n}{j}{l}(0) \, \ghat{k}{j}{l}
  \end{align}
  for all triples $(n,k,l)\in\J{N}$.
\end{lemma}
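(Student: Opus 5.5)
The plan is to compute the adjoint directly from the defining formula \eqref{eq:WignerCoeffTrafo}, using the standard Euclidean inner products on $\IC^{\abs{\J{N}}}$ and $\IC^{(2N+1)^3}$. The Wigner transform is a finite matrix, so its adjoint is its conjugate transpose. It therefore suffices to read off the matrix entries of $\coeffTrafo{N}$ and then conjugate and transpose them.

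First, we identify the entries. By \eqref{eq:WignerCoeffTrafo}, the output index $(k,j,l)$ receives a contribution only from input indices of the form $(n,-k,-l)$ with $n\ge\max\{|k|,|l|,|j|\}$. The corresponding matrix entry is
\[ \big(\coeffTrafo{N}\big)_{(k,j,l),(n,-k,-l)} = \i^{k-l}\sqrt{2n+1}\,\Wignerd{n}{j}{k}(0)\,\Wignerd{n}{j}{l}(0), \]
and all other entries vanish. The Wigner-d values at $0$ are real, so conjugation only affects the prefactor, which becomes $\conj{\i^{k-l}}=\i^{l-k}$.

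Next, we evaluate
\[ \big(\coeffTrafo{N}^H\ghat\big)_{(n,k',l')}=\sum_{(k,j,l)}\conj{\big(\coeffTrafo{N}\big)_{(k,j,l),(n,k',l')}}\,\ghat{k}{j}{l}. \]
The only nonzero terms are those with $k=-k'$, $l=-l'$, and $|j|\le n$. This gives
\[ \sqrt{2n+1}\,\i^{l-k}\sum_{j=-n}^n \Wignerd{n}{j}{-k'}(0)\,\Wignerd{n}{j}{-l'}(0)\,\ghat{-k'}{j}{-l'}. \]

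The main obstacle is this sign/index reflection: the formula in \eqref{eq:WignerCoeffTrafo} pairs the output $(k,l)$ with the input $(-k,-l)$, whereas the claimed formula \eqref{eq:AdjointCoeffTrafoRisbo} pairs $(n,k,l)$ with $\ghat{k}{j}{l}$. I would try to resolve it in one of two ways.

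The first option is to use the symmetry $\Wignerd{n}{j}{-k}(0)\Wignerd{n}{j}{-l}(0)=(-1)^{k+l}\Wignerd{n}{-j}{k}(0)\Wignerd{n}{-j}{l}(0)$, which follows from the relations quoted in the proof of the band-limited theorem, and then reindex $j\mapsto -j$. This only works if the resulting sign and argument pattern can be reconciled with the stated formula, so I would check it explicitly.

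The second option, which I expect the lemma intends, is to note that the sign convention $\fhat{n}{-k}{-l}$ in \eqref{eq:WignerCoeffTrafo} merely reflects the convention $\e^{-\i k\alpha}$ in the Wigner-D functions versus $\e^{+\i k\alpha}$ in the torus Fourier basis. Pairing the Fourier coefficient index $(k,j,l)$ consistently with the frequency convention of $\FourierTrafo{\mathcal R,N}$ in \eqref{eq:FourierMatrix}, which uses $\e^{-\i(\cdot)}$, removes the reflection. Indeed, substituting $(k,l)\to(-k,-l)$ in the Fourier index cancels the minus signs.

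With that identification, the conjugate prefactor $\i^{l-k}$ and the real Wigner-d factors give exactly \eqref{eq:AdjointCoeffTrafoRisbo}. As a final step, I would verify the result with the defining identity $\scp{\coeffTrafo{N}\fhat}{\ghat}=\scp{\fhat}{\coeffTrafo{N}^H\ghat}$ by exchanging the finite sums over $n$ and $j$, using $\sum_{k,j,l=-N}^{N}\sum_{n\ge\max\{|k|,|j|,|l|\}}^N=\sum_{n=0}^N\sum_{k,l,j=-n}^n$.
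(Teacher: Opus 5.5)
Your core computation is exactly the paper's proof: you read off the conjugate transpose entrywise, or equivalently expand $\scp{\coeffTrafo{N}\fhat}{\ghat}_2$ and exchange the finite sums. The reflection you flag, however, is a genuine obstruction and not a bookkeeping issue. Your first option cannot remove it. Carrying it out (your symmetry, then $j\mapsto-j$, then $\i^{k-l}(-1)^{k+l}=\i^{l-k}$) gives
\[ \big(\coeffTrafo{N}^H\ghat\big)_n^{k,l}=\sqrt{2n+1}\,\i^{l-k}\sum_{j=-n}^{n}\Wignerd{n}{j}{k}(0)\,\Wignerd{n}{j}{l}(0)\,\ghat{-k}{-j}{-l}. \]
The phase and Wigner-d factors are right, but $\ghat$ is evaluated at the reflected index. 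No identity for the numbers $\Wignerd{n}{j}{k}(0)$ can change which entries of $\ghat$ occur, and for general $\ghat$ the entries $\ghat{k}{j}{l}$ and $\ghat{-k}{-j}{-l}$ are independent. Concretely, let $\fhat$ be the unit vector at $(n,k,l)=(1,1,0)$ and $\ghat=\coeffTrafo{N}\fhat\neq0$. By \eqref{eq:WignerCoeffTrafo}, $\ghat$ is supported on indices with $(k,l)=(-1,0)$. The claimed formula for the entry $(1,1,0)$ therefore only involves $\ghat{1}{j}{0}=0$, and it would give $\scp{\fhat}{\coeffTrafo{N}^H\ghat}_2=0\neq\norm{\ghat}_{\ell_{2}}^{2}=\scp{\coeffTrafo{N}\fhat}{\ghat}_2$. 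So if \eqref{eq:WignerCoeffTrafo} is read literally (coefficient $\fhat{n}{-k}{-l}$, i.e.\ Fourier coefficients with respect to $\e^{+\i(k,j,l)\cdot(\alpha,\beta,\gamma)^{\top}}$), the statement is false and cannot be proved.

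The paper's proof avoids this only tacitly. Its first line already uses $\i^{k-l}\sum_{n}\sqrt{2n+1}\,\Wignerd{n}{j}{k}(0)\,\Wignerd{n}{j}{l}(0)\,\fhat{n}{k}{l}$, the unreflected convention. This is the convention compatible with the $\e^{-\i(k,j,l)\cdot(\alpha_m,\beta_m,\gamma_m)^{\top}}$ in \eqref{eq:FourierMatrix} and with $\bm D_{\mathcal R,N}=\FourierTrafo{\mathcal R,N}\,\coeffTrafo{N}$. Your second option is therefore the right one, and it is what the paper actually does. You should state it as the definition of $\coeffTrafo{N}$ used in the lemma, not as a step that removes the reflection from \eqref{eq:WignerCoeffTrafo}.

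Also, replacing $\e^{+\i}$ by $\e^{-\i}$ reflects all three indices, $(k,j,l)\mapsto(-k,-j,-l)$. Combined with $\Wignerd{n}{-j}{-k}(0)\,\Wignerd{n}{-j}{-l}(0)=(-1)^{k+l}\,\Wignerd{n}{j}{k}(0)\,\Wignerd{n}{j}{l}(0)$ and $\i^{l-k}(-1)^{k+l}=\i^{k-l}$, this turns \eqref{eq:WignerCoeffTrafo} into the unreflected operator. Reflecting only $(k,l)$, as you wrote, removes the minus signs from $\fhat$ but leaves a spurious factor $(-1)^{k+l}$, because $\Wignerd{n}{j}{-k}(0)\,\Wignerd{n}{j}{-l}(0)=\Wignerd{n}{j}{k}(0)\,\Wignerd{n}{j}{l}(0)$.

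Once the unreflected $\coeffTrafo{N}$ is fixed, your read-off (conjugating $\i^{k-l}$ to $\i^{l-k}$, using that the Wigner-d values are real, and exchanging the sums) is precisely the paper's argument. Drop the first option.
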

\begin{proof}
  Let $\fhat\in\mathbb{C}^{|\mathcal{J}_N|}$. Since the Wigner-d functions are real-valued, it yields
  \begin{align*}
    \scp{\coeffTrafo{N}\,\fhat}{\ghat}_{2}
    &= \sum_{k,j,l=-N}^N \braces*{\, \i^{k-l} \hspace{-20pt}  \sum_{n=\max\{|k|,|j|,|l|\}}^N \hspace{-20pt} \sqrt{2n+1} \, \Wignerd{n}{j}{k}(0) \, \Wignerd{n}{j}{l}(0) \,\fhat{n}{k}{l} } \, \overline{\ghat{k}{j}{l}} \\
    &= \sum_{n=0}^N \sum_{k,l=-n}^n  \fhat{n}{k}{l} \, \overline{\braces*{ \sqrt{2n+1} \,\i^{l-k} \sum_{j=-n}^n \Wignerd{n}{j}{k}(0) \, \Wignerd{n}{j}{l}(0) \, \ghat{k}{j}{l} }}
      = \scp{\fhat}{\coeffTrafo{N}^H\,\ghat}_2.
  \end{align*}
\end{proof}

\subsection{Computation of the \texorpdfstring{$\SO3$}{SO(3)}-Fourier Coefficients}\label{sec:InverseNSOFT}

The adjoint $\SO3$-Fourier transform plays a crucial role in computing the harmonic coefficients
\[ \fhat{n}{k}{l} = \scp{f}{\WignerD{n}{k}{l}}_{\L2SO3}, \quad (n,k,l)\in\J{N}, \]
of a given $N$-band-limited function $f\in\bandlimFunc{N}$ via numerical integration.
Using an exact quadrature rule with nodes $\tilde{\mathcal{R}} = \set{\mat R_1,\dots,\mat R_{M}}{}$ and weights $(\omega_m)_{m=1}^{M}$, these integrals reduce to
\[ \fhat{n}{k}{l} = \sum_{m=1}^{M} \omega_m \, f(\mat R_m) \, \overline{\WignerD{n}{k}{l}(\mat R_m)},\]
which essentially is the adjoint NSOFT on a weighted vector of function values. In matrix-vector notation it reads as
\[ \fhat = \bm D^{H}_{\tilde{\mathcal{R}},N} \, \mathrm{diag}(\omega_{m}) \, \bm f \]
and therefore $\bm D^{H}_{\tilde{\mathcal{R}},N} \, \mathrm{diag}(\omega_{m})$ is the left-inverse of the Wigner transform.

To enable exact computation of the harmonic coefficients of $N$-band-limited functions, we adopt a multiplicative quadrature scheme with respect to the Euler angles, using Gaussian quadrature along the first and third angles and Clenshaw-Curtis quadrature along the second Euler angle $\beta$, see~\cite{Potts2009}.
This construction yields an equispaced rotation grid, allowing the Fourier matrix $\FourierTrafo{\mathcal{R},N}^{H}$, as part of the adjoint NSOFT, to be computed via an equispaced trivariate FFT, which is significantly faster than the NFFT.

Following \cite{Khalid2015}, a Gauss-Legendre quadrature can be used instead of Clenshaw-Curtis, requiring only half as many nodes along the second Euler angle $\beta$. Since these nodes are nonequispaced, the Fourier matrix can be computed via a univariate NFFT combined with a bivariate FFT.

For further results concerning quadrature formulas on $\SO3$, see~\cite{Graef2008,Graef2009,Graef2011}.


\subsection{Symmetry Properties on \texorpdfstring{$\SO3$}{SO(3)}}\label{sec:Symmetries}

In many applications, such as crystallography, functions on $\SO3$ are real-valued and exhibit specific symmetries.
In the following, we analyze how these properties are reflected in the harmonic coefficients and, equivalently, in the Fourier coefficients of the corresponding DFS transform.
Exploiting these relations reduces storage requirements and accelerates the NSOFT-algorithms.

By the BMC property of the DFS function, the Fourier coefficients $\ghat$ satisfy $\ghat{k}{j}{l}=(-1)^{k+l}\,\ghat{k}{-j}{l}$, as stated earlier in \Cref{lem:SymGhatDFSMethod}.
The next lemma addresses further symmetry properties specific to real-valued functions.

\begin{lemma}\label{lem:SymmetryPropertyRealValued}
  Let $N\in\IN$ and $f\in\bandlimFunc{N}$. Moreover let $\fhat\in\IC^{\abs{\J{N}}}$ and $\ghat=\coeffTrafo{N} \fhat$ be given. Then the following are equivalent:
  \begin{enumerate}[label=(\roman*)]
    \item $f$ is real-valued,
    \item $\fhat{n}{k}{l}=(-1)^{k+l} \, \conj{\fhat{n}{-k}{-l}}$ for all  $(n,k,l)\in\J{N}$,
    \item $\ghat{k}{j}{l} =  \conj{\ghat{-k}{-j}{-l}}$ for all $(k,j,l)\in \set{-N,\dots,N}{}^{3} $.
  \end{enumerate}
\end{lemma}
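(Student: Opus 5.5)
The plan is to prove the equivalence by the two separate equivalences (i)$\Leftrightarrow$(ii) and (i)$\Leftrightarrow$(iii). Each one reduces to the uniqueness of an orthonormal expansion together with a conjugation identity for the basis functions. The DFS picture $g=\bm W_N f = f\circ\phi_{\SO3}$ from the first theorem of \Cref{sec:DFSBandlimitedSO3} links the two sides.

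\emph{(i)$\Leftrightarrow$(ii).} First I will establish the conjugation identity
\[ \conj{\WignerD{n}{k}{l}(\mat R)} = (-1)^{k+l}\,\WignerD{n}{-k}{-l}(\mat R). \]
By the definition, $\conj{\WignerD{n}{k}{l}(\mat R(\alpha,\beta,\gamma))} = \sqrt{2n+1}\,\e^{\i k\alpha}\,\Wignerd{n}{k}{l}(\cos\beta)\,\e^{\i l\gamma}$, since the Wigner-d functions are real. It therefore suffices to have $\Wignerd{n}{-k}{-l} = (-1)^{k+l}\,\Wignerd{n}{k}{l}$. I would take this from \cite{Varshalovich1988}. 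It can also be checked from the explicit formula: $a$, $b$ and $s$ are unchanged under $(k,l)\mapsto(-k,-l)$, so only the sign $(-1)^{\nu}$ changes, and one has to track the indicator $\charFunc{k>l}$ carefully. Conjugating \eqref{eq:BandLimitedHarmonicSeries} and reindexing $(k,l)\to(-k,-l)$ then gives
\[ \conj f = \sum_{n=0}^N\sum_{k,l=-n}^n (-1)^{k+l}\,\conj{\fhat{n}{-k}{-l}}\,\WignerD{n}{k}{l}. \]
Since the Wigner-D functions form an orthonormal basis, $f=\conj f$ holds if and only if the coefficients agree, which is exactly (ii).

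\emph{(i)$\Leftrightarrow$(iii).} By the first theorem of \Cref{sec:DFSBandlimitedSO3}, $g=f\circ\phi_{\SO3}=\sum_{k,j,l=-N}^N \ghat{k}{j}{l}\,\e^{\i(k,j,l)\cdot\bm x^\top}$ is a trigonometric polynomial in $\trigonometricPolynomials{N}$. Because $\phi_{\SO3}$ is surjective onto $\SO3$, $f$ is real-valued if and only if $g$ is real-valued. Conjugating the finite Fourier sum and reindexing $\bm m\to-\bm m$ shows that $\conj g$ has coefficients $\conj{\ghat{-k}{-j}{-l}}$. The index set $\{-N,\dots,N\}^3$ is invariant under negation, so uniqueness of Fourier coefficients gives $g=\conj g$ if and only if (iii) holds. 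The paper's sign convention for the exponential ($\e^{\i\bm m\cdot\bm x}$ versus $\e^{-\i\bm m\cdot\bm x}$) does not matter here, since the conjugate-symmetry condition is the same either way.

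The only step with real content is the sign identity $\Wignerd{n}{-k}{-l}=(-1)^{k+l}\Wignerd{n}{k}{l}$ used in the first part. Everything else is bookkeeping, so I would verify that sign first: check it directly against the stated definition by treating the cases $k>l$ and $k\le l$ separately, and fall back on the literature reference otherwise. As an optional sanity check, one can show (ii)$\Rightarrow$(iii) directly from \eqref{eq:WignerCoeffTrafo}, using the symmetry $\Wignerd{n}{-j}{k}(0)=(-1)^{n+j}\Wignerd{n}{j}{k}(0)$ from the proof of that theorem together with $\conj{\i^{k-l}}=\i^{l-k}$. This is not needed for the proof.
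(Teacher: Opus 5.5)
Your proposal is correct and follows essentially the same route as the paper. For (i)$\Leftrightarrow$(ii) you use the conjugation identity $\conj{\WignerD{n}{k}{l}}=(-1)^{k+l}\,\WignerD{n}{-k}{-l}$, which comes from $\Wignerd{n}{-k}{-l}=(-1)^{k+l}\Wignerd{n}{k}{l}$, together with uniqueness of the harmonic expansion; the paper phrases this through inner products rather than by conjugating the series. For (i)$\Leftrightarrow$(iii) you use the conjugate symmetry of the Fourier coefficients of the real-valued DFS transform, and your appeal to the surjectivity of $\phi_{\SO3}$ for ``$g$ real $\Rightarrow$ $f$ real'' makes precise a direction the paper leaves implicit.
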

\begin{proof} If $f$ is real-valued, its DFS-transform $g$ is also real-valued.

  \vspace{0.2cm}\noindent\underline{$(i)\Leftrightarrow(iii)$:} This is a standard property of Fourier series, see~\cite{Plonka2018}.

  \vspace{0.2cm}\noindent\underline{$(i)\implies(ii)$:}
  Using that the Wigner-d functions are real-valued and satisfy the symmetry property $\Wignerd{n}{k}{l}(x) = (-1)^{k+l} \, \Wignerd{n}{-k}{-l}(x)$ (see~\cite{Varshalovich1988}), it follows that
  \begin{equation}\label{eq:WignerDconjSymmetryProperty}
    \conj{\WignerD{n}{-k}{-l}(\mat R(\alpha,\beta,\gamma))} = \e^{-\i k \alpha}\, \conj{\Wignerd{n}{-k}{-l}(\cos\beta)} \,\e^{-\i l \gamma} = (-1)^{k+l}\,\WignerD{n}{k}{l}(\mat R(\alpha,\beta,\gamma)).
  \end{equation}
  Hence, the harmonic coefficients satisfy
  \[\fhat{n}{k}{l} = \scp{f}{\WignerD{n}{k}{l}}_{\L2SO3} = (-1)^{k+l}\,\conj{\scp{\conj f}{\WignerD{n}{-k}{-l}}_{\L2SO3}}\]
  which yields the assumption, since $f$ is real-valued.

  \vspace{0.2cm}\noindent\underline{$(ii)\implies(i)$:}
  By \cref{eq:WignerDconjSymmetryProperty} follows
  \[ 0 = (-1)^{k+l} \, \conj{\fhat{n}{-k}{-l}}-\fhat{n}{k}{l} = (-1)^{k+l} \, \conj{\scp{f}{\WignerD{n}{-k}{-l}}} - \scp{f}{\WignerD{n}{k}{l}} = \scp{\conj f-f}{\WignerD{n}{k}{l}} \]
  for all $(n,k,l)\in\J{N}$. Hence we obtain $\conj f-f=0$, since $\conj{f}-f\in\bandlimFunc{N}$.
\end{proof}

This symmetry property allows us to halve the length of the Fourier series used in the NFFT or FFT when computing the $\SO3$-Fourier transform.
To exploit this, we split the Fourier series and reorder the summation, yielding
\begin{equation}\label{eq:RealValuedFourierSeries}
  \sum_{k,j,l=-N}^{N}\ghat{k}{j}{l}\,\e^{\i(k,j,l)\cdot(\alpha,\beta,\gamma)^{\top}} = \Re \braces*{\sum_{k,l=-N}^{N} \sum_{j=0}^{N} (1+\chi_{j\neq0}) \, \ghat{k}{j}{l} \, \e^{\i(k,j,l)\cdot(\alpha,\beta,\gamma)^{\top}} }.
\end{equation}

An additional symmetry property is established in the following lemma.

\begin{lemma}\label{lem:SymmetryPropertyAntipodal}
  Let $N\in\IN$ and $f\in\bandlimFunc{N}$. Moreover let $\fhat\in\IC^{\abs{\J{N}}}$ and $\ghat=\coeffTrafo{N} \fhat$ be given. Then the following are equivalent:
  \begin{enumerate}[label=(\roman*)]
    \item $f$ satisfies \(f(\mat R)=f(\mat R^{-1})\) for allmost all $\mat R\in\SO3$,
    \item $\fhat{n}{k}{l} = (-1)^{k+l} \, \fhat{n}{-l}{-k}$ for all $(n,k,l)\in\J{N}$,
    \item $\ghat{k}{j}{l} =  \ghat{-l}{-j}{-k}$ for all $(k,j,l)\in\set{-N,\dots,N}{}^{3}$.
  \end{enumerate}
\end{lemma}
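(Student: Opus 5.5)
The plan is to prove $(i)\Leftrightarrow(ii)$ in the harmonic domain and $(i)\Leftrightarrow(iii)$ in the spatial domain via the DFS transform $g=f\circ\phi_{\SO3}$. The direct link $(ii)\Leftrightarrow(iii)$ through \cref{eq:WignerCoeffTrafo} is only a consistency check. Since $f\in\bandlimFunc{N}$ is a trigonometric-type polynomial, it is continuous. Hence ``for almost all $\mat R$'' in $(i)$ is equivalent to ``for all $\mat R$'', and we may work pointwise throughout.

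\underline{$(i)\Leftrightarrow(ii)$:} Since the unnormalized Wigner-D functions are matrix elements of unitary representations, \cref{eq:RepresentationProperty} with $\mat Q=\mat R^{-1}$ together with unitarity gives
\[ \WignerD{n}{k}{l}(\mat R^{-1})=\conj{\WignerD{n}{l}{k}(\mat R)}. \]
Combining this with \cref{eq:WignerDconjSymmetryProperty} yields
\[ \WignerD{n}{k}{l}(\mat R^{-1})=(-1)^{k+l}\,\WignerD{n}{-l}{-k}(\mat R). \]
Inserting this into the band-limited expansion \eqref{eq:BandLimitedHarmonicSeries} and reindexing $(k,l)\mapsto(-l,-k)$, which preserves $\J{N}$ and the parity of $k+l$, gives
\[ f(\mat R^{-1})=\sum_{(n,k,l)\in\J{N}}(-1)^{k+l}\,\fhat{n}{-l}{-k}\,\WignerD{n}{k}{l}(\mat R). \]
By uniqueness of harmonic coefficients (orthonormality of the Wigner-D functions), $f(\mat R^{-1})=f(\mat R)$ holds for all $\mat R$ if and only if $(ii)$ holds.

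\underline{$(i)\Leftrightarrow(iii)$:} We use $\mat R(\alpha,\beta,\gamma)^{-1}=\mat R_z(-\gamma)\mat R_y(-\beta)\mat R_z(-\alpha)=\phi_{\SO3}(-\gamma,-\beta,-\alpha)$, which is valid on all of $\IT^3$ since $\phi_{\SO3}$ is defined for $\beta\in\IT$. Thus $(i)$ is equivalent to
\[ g(\alpha,\beta,\gamma)=g(-\gamma,-\beta,-\alpha)\quad\text{for all }(\alpha,\beta,\gamma)\in\IT^3. \]
For the forward direction, $(i)$ gives this identity for all points. For the converse, it suffices that $\phi_{\SO3}$ is surjective.

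By the substitution $(\alpha,\beta,\gamma)\mapsto(-\gamma,-\beta,-\alpha)$, which preserves Lebesgue measure on $\IT^3$, the Fourier coefficient of $(\alpha,\beta,\gamma)\mapsto g(-\gamma,-\beta,-\alpha)$ at $(k,j,l)$ equals $\ghat{-l}{-j}{-k}$. This is the same computation as in \Cref{lem:SymGhatDFSMethod}. Since $g\in\trigonometricPolynomials{N}$ and $\ghat=\coeffTrafo{N}\fhat$ are its Fourier coefficients by the band-limited DFS theorem, uniqueness of Fourier coefficients of continuous functions gives the equivalence with $(iii)$. The coefficient range $\set{-N,\dots,N}{}^3$ is invariant under $(k,j,l)\mapsto(-l,-j,-k)$.

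As a cross-check of $(ii)\Rightarrow(iii)$, one can compute from \cref{eq:WignerCoeffTrafo}:
\[ \ghat{-l}{-j}{-k}=\i^{k-l}\sum_n\sqrt{2n+1}\,\Wignerd{n}{-j}{-l}(0)\,\Wignerd{n}{-j}{-k}(0)\,\fhat{n}{l}{k}. \]
Using $\Wignerd{n}{-j}{-l}=(-1)^{j+l}\Wignerd{n}{j}{l}$, the sign factors combine to $(-1)^{k+l}$, and then $(ii)$ turns the result into $\ghat{k}{j}{l}$.

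The main obstacle is getting the sign and index conventions right in the inversion formula for Wigner-D functions, i.e.\ verifying $\WignerD{n}{k}{l}(\mat R^{-1})=(-1)^{k+l}\WignerD{n}{-l}{-k}(\mat R)$ under the paper's normalization $\e^{-\i k\alpha}$. If the unitarity argument feels too abstract, I would verify the identity directly from the Euler-angle formula, using the standard relation $d^n_{k,l}(-\beta)=d^n_{l,k}(\beta)$.
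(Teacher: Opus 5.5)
Your proof is correct and uses the same ingredients as the paper's: the relation $\mat R(\alpha,\beta,\gamma)^{-1}=\mat R(-\gamma,-\beta,-\alpha)$ combined with reindexing the torus Fourier series, and the identity $\WignerD{n}{k}{l}(\mat R^{-1})=(-1)^{k+l}\,\WignerD{n}{-l}{-k}(\mat R)$, which the paper reads off from the Euler-angle formula and $\Wignerd{n}{k}{l}=\Wignerd{n}{-l}{-k}$, whereas you obtain it from unitarity. The difference is organizational: the paper runs the cycle $(i)\Rightarrow(ii)\Rightarrow(iii)\Rightarrow(i)$, while you prove $(i)\Leftrightarrow(ii)$ and $(i)\Leftrightarrow(iii)$ as two-sided equivalences, so the substitution into \cref{eq:WignerCoeffTrafo} that the paper needs for $(ii)\Rightarrow(iii)$ becomes only a consistency check in your version.
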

\begin{proof} The Euler angles of the inverse rotation satisfy
  \begin{equation}\label{eq:InverseRotation}
    \mat R^{-1}(\alpha,\beta,\gamma) = \mat R(-\gamma,-\beta,-\alpha) = \mat R (\pi-\gamma,\beta,\pi-\alpha).
  \end{equation}

  \vspace{0.2cm}\noindent\underline{$(i)\implies(ii)$:}
  Using the symmetry property $\Wignerd{n}{k}{l}(x) = \Wignerd{n}{-l}{-k}(x)$ (see~\cite{Varshalovich1988}), we obtain
  \[\WignerD{n}{k}{l}(\mat R^{-1}) = (-1)^{k+l}\,\e^{-\i l \alpha} \, \Wignerd{n}{k}{l}(\cos\beta)  \, \e^{-\i k \gamma} = (-1)^{k+l}\,\WignerD{n}{-l}{-k}(\mat R).\]
  Hence, the harmonic coefficients satisfy
  \[\fhat{n}{k}{l} = \scp{f(\mat R^{-1})}{\WignerD{n}{k}{l}(\mat R^{-1})}_{\L2SO3} = (-1)^{k+l}\, \scp{f(\mat R)}{\WignerD{n}{-l}{-k}(\mat R)}_{\L2SO3},\]
  which yields the assumption.

  \vspace{0.2cm}\noindent\underline{$(ii)\implies(iii)$:}
  Substituting $(ii)$ into \cref{eq:WignerCoeffTrafo} and exploiting the symmetry properties of the Wigner-d functions (see~\cite{Varshalovich1988}) immediately yields the result.

  \vspace{0.2cm}\noindent\underline{$(iii)\implies(i)$:}
  Using \cref{eq:InverseRotation}, the Fourier series expansion of the DFS transform of $f(\mat R^{-1})$ can be written as
  \[f(\mat R^{-1}) = \sum_{k,j,l_=-N}^{N} \ghat{k}{j}{l}\, \e^{\i(k,j,l)\cdot(-\gamma,-\beta,-\alpha)^{\top}} = \sum_{k,j,l=-N}^{N} \ghat{k}{j}{l}\, \e^{\i(-l,-j,-k)\cdot(\alpha,\beta,\gamma)^{\top}}.\]
  Reordering the summation and substituting the Fourier coefficients according to property~$(iii)$, the right-hand side recovers the DFS transform of $f$.
\end{proof}

Especially in the context of crystallography, rotation-dependent functions are often invariant under a finite subgroup $S_{L}\subset\SO3$.
When represented via the DFS transform, these symmetries appear on the 3-torus as even/odd symmetries or $\tfrac{2\pi}{n}$-periodicity (for some $n\in\IN$),
and naturally induce analogous relations among the Fourier coefficients.
\begin{definition}
  Let $S_{R}$ and $S_{L}$ be finite subgroups of $\SO3$. A function $f\colon\SO3\to\IC$ is said to have right symmetry $S_{R}$ and left symmetry $S_{L}$ if
  \[ f(\mat R) = f(\mat s_{R} \cdot \mat R \cdot \mat s_{L} ) \]
  for all $\mat s_{R}\in S_{R}$ and $\mat s_{L}\in S_{L}$.
\end{definition}
Note that left and right symmetries do not generally coincide, due to the non-commutativity of rotation composition.
However, the left and right symmetry groups are identical if the function satisfies the inversion symmetry property of \Cref{lem:SymmetryPropertyAntipodal}.
A complete list of all finite symmetry groups on $\SO3$ is provided in \Cref{tab:Symmetries},
while the icosahedral group rarely occurs in crystallography due to its fivefold rotational symmetry, which is incompatible with periodic crystal lattices.

\begin{table}[h]
\renewcommand{\arraystretch}{1.5}
\begin{tabularx}{\textwidth}{|l|c|X|c|}
\hline
\multicolumn{2}{|c|}{Finite symmetry groups} & \multicolumn{1}{c|}{Representative Set} & \multicolumn{1}{c|}{Cardinality} \\
\hline
Cyclic group 	& $C_r$ & $\set{\mat R_z(\tfrac{2\pi\,s}{r})}{s=0,\dots,r-1}$ & $r$  \\
Dihedral group 			& $D_r$	& $\set{C_r \, \mat R_y(\pi\,s)}{s=0,1}$ & $2r$  \\
Tetrahedral group 	& $T$		& $\set{ D_2 \, \mat R_{\vec{1}}(\tfrac{2\pi\,s}{3})}{s=0,1,2}$ & $12$  \\
Octahedral group 		& $O$		& $\set{ D_4 \, \mat R_{\vec{1}}(\tfrac{2\pi\,s}{3})}{s=0,1,2}$ & $24$ \\
Icosahedral group & $I$ & $\set{ \mat R_{\eta}(\tfrac{2\pi\,s}{3})\,D_{5}\,\mat R_{\eta}(\tfrac{2\pi\,t}{3}) }{s,t=0,1,2}$  \newline with $\eta=\vek{\varphi^{2},0,\varphi+\sqrt{1+\varphi^{2}}\sin\tfrac{2\pi}5}^{\top}$ \newline and golden ratio $\varphi=\tfrac{1+\sqrt5}2$  & $60$ \\
\hline
\end{tabularx}
\caption{List of all finite symmetry groups on $\SO3$ ($r\in\IN$).}
\label{tab:Symmetries}
\end{table}

Similar to factor sets, we use the previous definition to introduce the double coset space
\begin{equation*}
  \lcrnicefrac{S_R}{\SO3}{S_L}=\set{S_R\,\mat R\,S_L }{ \mat R \in\SO3}
\end{equation*}
where $S_{R}$ and $S_{L}$ are finite subgroups of $\SO3$.

\begin{changedbox}
It is important to note that the double coset space is, in general, neither a group nor a smooth manifold.

However, if only a left or only a right symmetry group is imposed, the resulting quotient is a smooth manifold and can be regarded as a homogeneous space.
In this case, the quotient structure can be understood within the DFS framework as well, now with not just a double, but a multi-fold coverage.
\end{changedbox}

Since the left and right symmetry groups can be classified as shown in \Cref{tab:Symmetries}, additional symmetry properties can be derived.

\begin{theorem}\label{thm:CrystalSymmetryProperties}
  Let $S_{R}$ and $S_{L}$ be finite subgroups of $\SO3$, and $f\in\bandlimFunc{N}{\lcrnicefrac{S_R}{\SO3}{S_L}}$ with $N\in\IN$.
  Furthermore, let $\fhat\in\IC^{\abs{\J{N}}}$ and $\ghat=\coeffTrafo{N} \fhat$ be given.
  Then, for any $r\in\IN$ it yields:
  \begin{enumerate}[label=\alph*)]
    \item The following are equivalent:
          \begin{enumerate}[label=(\roman*),ref={[a)(\roman*)]}]
            \item $C_{r} \subseteq S_{R}$
            \item\label{prop:a2} If $k \mod r \neq 0$ then $\fhat{n}{k}{l}=0$ for all $(n,k,l)\in\J{N}$.
            \item\label{prop:a3} If $k \mod r \neq 0$ then $\ghat{k}{j}{l}=0$ for all $(k,j,l)\in\set{-N,\dots,N}{}^{3}$.
          \end{enumerate}

    \item The following are equivalent:
          \begin{enumerate}[label=(\roman*)]
            \item $D_{r} \subseteq S_{R}$
            \item Property \ref{prop:a2} and $\fhat{n}{k}{l} = (-1)^{n+k} \, \fhat{n}{-k}{l}$ for all $(n,k,l)\in\J{N}$.
            \item Property \ref{prop:a3} and $\ghat{k}{j}{l} = (-1)^{j} \, \ghat{-k}{j}{l}$ for all $(k,j,l)\in\set{-N,\dots,N}{}^{3}$.
          \end{enumerate}
    \item The following are equivalent:
          \begin{enumerate}[label=(\roman*),ref={[c)(\roman*)]}]
            \item $C_r\subseteq S_L$
            \item\label{prop:c2} If $l \mod r \neq 0$ then $\fhat{n}{k}{l}=0$ for all $(n,k,l)\in\J{N}$.
            \item\label{prop:c3} If $l \mod r \neq 0$ then $\ghat{k}{j}{l}=0$ for all $(k,j,l)\in\set{-N,\dots,N}{}^{3}$.
          \end{enumerate}
    \item The following are equivalent:
          \begin{enumerate}[label=(\roman*)]
            \item $D_r\subseteq S_L$
            \item Property \ref{prop:c2} and $\fhat{n}{k}{l} = (-1)^{n+l} \, \fhat{n}{k}{-l}$ for all $(n,k,l)\in\J{N}$.
            \item Property \ref{prop:c3} and $\ghat{k}{j}{l} = (-1)^{j} \, \ghat{k}{j}{-l}$ for all $(k,j,l)\in\set{-N,\dots,N}{}^{3}$.
          \end{enumerate}
  \end{enumerate}
\end{theorem}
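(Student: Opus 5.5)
The proof handles each part separately, with the equivalences (i)$\Leftrightarrow$(ii) and (i)$\Leftrightarrow$(iii) shown directly, in the same style as \Cref{lem:SymmetryPropertyRealValued,lem:SymmetryPropertyAntipodal}. Since $C_r$ and $D_r$ are generated by $\mat R_z(\tfrac{2\pi}{r})$ and, for $D_r$, additionally by $\mat R_y(\pi)$, it suffices to test invariance under these generators. Invariance under a generator can be expressed either on the harmonic side, through the representation property \eqref{eq:RepresentationProperty}, or on the torus side, through the action on Euler angles; comparing coefficients then uses the uniqueness of expansions in $\bandlimFunc{N}$ and in $\trigonometricPolynomials{N}$. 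The same proofs apply to a) and b) with the roles of $k,\alpha$ and $l,\gamma$ interchanged in c) and d), so only a) and b) need to be written out. Note also that the definition of the symmetries must be read consistently with the Euler angle convention: $C_r\subseteq S_R$ acts by $\mat R\mapsto \mat R_z(\tfrac{2\pi}{r})\mat R$, i.e.\ $\alpha\mapsto\alpha+\tfrac{2\pi}{r}$.

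For a), the key computation is $\mat R_z(\tfrac{2\pi}{r})\mat R(\alpha,\beta,\gamma)=\mat R(\alpha+\tfrac{2\pi}{r},\beta,\gamma)$. On the harmonic side this gives $\WignerD{n}{k}{l}(\mat R_z(\tfrac{2\pi}{r})\mat R)=\e^{-2\pi\i k/r}\WignerD{n}{k}{l}(\mat R)$. Hence $f$ is invariant if and only if $(1-\e^{-2\pi\i k/r})\fhat{n}{k}{l}=0$ for all $(n,k,l)$, which is exactly (ii). On the torus side, $g(\alpha+\tfrac{2\pi}{r},\beta,\gamma)=g(\alpha,\beta,\gamma)$ holds if and only if $\ghat{k}{j}{l}(\e^{2\pi\i k/r}-1)=0$, which is (iii). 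For (iii)$\Rightarrow$(i) one needs that $f$ is determined by $g$, which holds because $\phi_{\SO3}$ is surjective.

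For b), it remains to treat the extra generator $\mat R_y(\pi)$. I would first compute its action on Euler angles: $\mat R_y(\pi)\mat R(\alpha,\beta,\gamma)=\mat R(-\alpha,\pi-\beta,\gamma+\pi)$ or an equivalent representative, using $\mat R_y(\pi)\mat R_z(\alpha)=\mat R_z(-\alpha)\mat R_y(\pi)$. On the harmonic side I would use $\WignerD{n}{k}{l}(\mat R_y(\pi)\mat R)=\frac{1}{\sqrt{2n+1}}\sum_j \WignerD{n}{k}{j}(\mat R_y(\pi))\WignerD{n}{j}{l}(\mat R)$ together with $\Wignerd{n}{k}{j}(-1)=(-1)^{n+k}\delta_{k,-j}$. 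This gives $\WignerD{n}{k}{l}(\mat R_y(\pi)\mat R)=(-1)^{n+k}\WignerD{n}{-k}{l}(\mat R)$, and comparing coefficients yields $\fhat{n}{k}{l}=(-1)^{n+k}\fhat{n}{-k}{l}$, possibly after reindexing $k\to-k$.

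On the torus side, invariance reads $g(\alpha,\beta,\gamma)=g(-\alpha,\pi-\beta,\gamma+\pi)$. Comparing Fourier coefficients gives $\ghat{k}{j}{l}=(-1)^{j+l}\ghat{-k}{-j}{l}$. Combining this with the BMC relation $\ghat{-k}{-j}{l}=(-1)^{-k+l}\ghat{-k}{j}{l}$ from \Cref{lem:SymGhatDFSMethod} produces a sign $(-1)^{j+k}$. The remaining factor $(-1)^k$ can be removed: under (a)(iii) with $r$ even, the surviving $k$ satisfy $r\mid k$, so this factor must be reconciled with the stated $(-1)^j$. I expect the main obstacle to be exactly this sign bookkeeping, that is, choosing the right Euler representative of $\mat R_y(\pi)\mat R$ and applying the BMC symmetry so that the stated form $(-1)^j\ghat{-k}{j}{l}$ appears. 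If the signs do not match directly, I would instead derive (iii) from (ii) through the explicit formula \eqref{eq:WignerCoeffTrafo}, using $\Wignerd{n}{j}{-k}(0)=(-1)^{n+j}\Wignerd{n}{j}{k}(0)$ and the prefactor $\i^{k-l}$. Finally, d) follows by the analogous right multiplication $\mat R\,\mat R_y(\pi)$, which acts on the last index of the Wigner-D functions.
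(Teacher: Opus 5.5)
Your overall strategy matches the paper's: the harmonic side via the representation property \eqref{eq:RepresentationProperty} (the paper averages over all of $D_r$, you test generators, which is equivalent), the torus side via the action on Euler angles, and coefficient comparison. Part a), part d), and your harmonic computation $\WignerD{n}{k}{l}(\mat R_y(\pi)\mat R)=(-1)^{n+k}\WignerD{n}{-k}{l}(\mat R)$ in b) are correct.

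The step that fails is the Euler-angle identity for $\mat R_y(\pi)$ in b). The claim $\mat R_y(\pi)\mat R(\alpha,\beta,\gamma)=\mat R(-\alpha,\pi-\beta,\gamma+\pi)$ is false. Applying your own rule $\mat R_y(\pi)\mat R_z(\alpha)=\mat R_z(-\alpha)\mat R_y(\pi)$ gives $\mat R_y(\pi)\mat R(\alpha,\beta,\gamma)=\mat R(-\alpha,\beta+\pi,\gamma)$. The triple you wrote instead equals $\mat R_z(\pi)\mat R_y(\pi)\mat R(\alpha,\beta,\gamma)=\mat R_x(\pi)\mat R(\alpha,\beta,\gamma)$, and this is exactly where the spurious factor $(-1)^{k}$ comes from.

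Your patch, absorbing $(-1)^k$ via $r\mid k$, works only for even $r$. There $\mat R_z(\pi)\in C_r$, hence $\mat R_x(\pi)\in D_r$. For odd $r$ (e.g.\ $D_1$ or the trigonal $D_3$), the relation $\ghat{k}{j}{l}=(-1)^{j+k}\ghat{-k}{j}{l}$ that you derive characterizes invariance under $\langle C_r,\mat R_x(\pi)\rangle=\mat R_z(-\tfrac{\pi}{2})\,D_r\,\mat R_z(\tfrac{\pi}{2})$. That is a dihedral group with different two-fold axes. Its invariance condition is not equivalent to your (correct) condition (ii), so (i)$\Leftrightarrow$(iii) is not established as written.

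Your fallback is to derive (iii) from (ii) through \eqref{eq:WignerCoeffTrafo} with $\Wignerd{n}{j}{-k}(0)=(-1)^{n+j}\Wignerd{n}{j}{k}(0)$. This does give the correct sign $(-1)^{j}$: the factors $(-1)^{n+j}(-1)^{n+k}$ combine with $\i^{-2k}$. It is also exactly the paper's (ii)$\Rightarrow$(iii) step. But it yields only one direction. You would still need (iii)$\Rightarrow$(i), or (iii)$\Rightarrow$(ii) via injectivity of $\coeffTrafo{N}$.

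The simplest repair is to use the correct identity, which is the one the paper uses for (iii)$\Rightarrow$(i). Since $g(-\alpha,\beta+\pi,\gamma)=\sum_{k,j,l}(-1)^{j}\,\ghat{-k}{j}{l}\,\e^{\i(k,j,l)\cdot(\alpha,\beta,\gamma)^{\top}}$, invariance under $\mat R_y(\pi)$ is directly equivalent to $\ghat{k}{j}{l}=(-1)^{j}\ghat{-k}{j}{l}$. No appeal to the BMC symmetry of \Cref{lem:SymGhatDFSMethod} is needed. With this correction your argument works for every $r$.
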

\begin{proof}
  We will only proof \textit{b)}. The other cases work analogous.

  \vspace{0.2cm}\noindent\underline{$(i)\implies(ii)$:}
  Let $S_{R}=D_{r}$ and $S_{L}=\set{\id}{}$.
  The harmonic coefficients of $f$, with respect to the $\L2SO3$-norm satisfy
  \[ \fhat{n}{k}{l} = \scp{f}{\WignerD{n}{k}{l}}_{\L2SO3} = \sum_{\mat P\in D_{r}} \int_{\lcrnicefrac{D_{r}~}{\;\,\SO3\;}{~\set{\id}{}}} f(\mat P \mat R)\, \conj{\WignerD{n}{k}{l}(\mat P \mat R)} \d{\mat R} \]
  for all $(n,k,l)\in\J{N}$.
  Using the symmetry of $f$ and the representation property~\eqref{eq:RepresentationProperty}, we obtain
  \begin{align*}
    \fhat{n}{k}{l} &= \sum_{\mat P\in D_{r}} \sum_{u=-n}^{n} \conj{\WignerD{n}{k}{u}(\mat P)} \,  \int_{\lcrnicefrac{D_{r}~}{\;\,\SO3\;}{~\set{\id}{}}} f(\mat R)\, \conj{\WignerD{n}{u}{l}(\mat R)} \d{\mat R} \\
    &= \sum_{u=-n}^{n} \braces*{\sum_{\mat P\in D_{r}}\conj{\WignerD{n}{k}{u}(\mat P)}} \cdot  \scp{f}{\WignerD{n}{u}{l}}_{\Lp{2}(\lcrnicefrac{D_{r}~}{\;\,\SO3\;}{~\set{\id}{}})} .
  \end{align*}
  By the definition of the Wigner-D functions, it follows
  \begin{align*}
    \sum_{\mat P\in D_{r}}\conj{\WignerD{n}{k}{u}(\mat P)} &= \braces*{\sum_{s=0}^{r-1} \e^{-2\pi\i\,k\,s/r}}\braces*{\Wignerd{n}{k}{u}(1)+\Wignerd{n}{k}{u}(-1)} \\
                                                        &= r\cdot\charFunc{r\,\nmid\, k}\cdot\braces*{\charFunc{k=u}+(-1)^{n+k}\cdot\charFunc{-k=u}}
  \end{align*}
  which immediately yields $(ii)$.

  \vspace{0.2cm}\noindent\underline{$(ii)\implies(iii)$:}
  Substituting $(ii)$ into \cref{eq:WignerCoeffTrafo} and exploiting the symmetry properties of the Wigner-d functions (see~\cite{Varshalovich1988}) immediately yields the result.

  \vspace{0.2cm}\noindent\underline{$(iii)\implies(i)$:}
  Obviously,
  \[ D_{r}=\set{\mat R(\tfrac{2\pi s}{r},\pi,0)}{s=0,\dots,r-1;\,t=0,1}. \]
  For arbitrary $s$ and $t$ we obtain
  \[ \mat R(\tfrac{2\pi s}{r},t\pi,0) \, \mat R(\alpha,\beta,\gamma) = \mat R(\tfrac{2\pi s}{r}-\alpha,\beta+t\pi,\gamma). \]
  Using this identity, the Fourier series expansion of the DFS transform of $f$ at $\mat P \cdot \mat R$ with $\mat P \in D_{r}$ reads as
  \begin{align*}
    f(\mat R(\tfrac{2\pi s}{r}-\alpha,\beta+t\pi,\gamma)) &= \sum_{k,j,l=-N}^{N} \ghat{k}{j}{l}\, \e^{\i(k,j,l)\cdot(\tfrac{2\pi s}{r}-\alpha,\beta+t\pi,\gamma)^{\top}} \\
                                                         &= \sum_{k,j,l=-N}^{N} \e^{2\pi\i ks/r}\,(-1)^{tj}\,\ghat{k}{j}{l}\, \e^{\i(-k,j,l)\cdot(\alpha,\beta,\gamma)^{\top}}.
  \end{align*}
  By property~\ref{prop:a3} we already know that $\e^{2\pi\i ks/r}\,\ghat{k}{j}{l}=\ghat{k}{j}{l}$. Moreover, reordering the summation and substituting the Fourier coefficients according to property~$(iii)$ shows that the right-hand side coincides with the DFS transform of $f$ at $\mat R$.
  Hence, $f(\mat P \, \mat R)=f(\mat R)$ for all $\mat P \in D_{r}$.
\end{proof}

Overall, the symmetry properties established in \Cref{lem:SymGhatDFSMethod}, \Cref{lem:SymmetryPropertyRealValued}, \Cref{lem:SymmetryPropertyAntipodal}, and \Cref{thm:CrystalSymmetryProperties} can be leveraged in four ways:
\begin{itemize}
  \item Since many harmonic coefficients either vanish or coincide up to a sign in their real and imaginary parts, the symmetry properties substantially reduce the disk storage requirements. Specifically if $S_{L}$ and $S_{R}$ are cyclic or dihedral, the compression factor is
        \[ c_{f} = (1+\charFunc{f \text{ is real\,}})\cdot(1+\charFunc{f(\mat R) = f(\mat R^{-1}) \text{ for all } \mat R \in \SO3})\cdot\abs{S_{L}}\cdot\abs{S_{R}}. \]

  \item The (direct) Wigner transform $\coeffTrafo{N}$ and its adjoint $\coeffTrafo{N}^{H}$ speed up by a factor $c_{f}$, as only one representative of each symmetry class of harmonic/Fourier coefficients must be computed.

  \item The symmetry properties of the Fourier coefficients $\ghat$ reduce the effective size of the discrete Fourier transform to $\tfrac{2N+1}{r} \times (2N+1) \times \tfrac{2N+1}{s}$, where $r$ and $s$ are the orders of the underlying cyclic groups.
  For dihedral groups, the transform can be further reduced by splitting it into cosine and sine parts.

  \item The inverse $\SO3$-Fourier transform, see \Cref{sec:InverseNSOFT}, requires function values only on one representative quadrature node per symmetry class. For optimal efficiency, the bandwidth $N$ should satisfy $r,s \mid (2N+2)$ for the left/right groups $C_{r},D_{r}$ and $C_{s},D_{s}$.
\end{itemize}


\subsection{Fast Realizations of the \texorpdfstring{$\coeffTrafo{N}$}{W_N}-Operator}\label{sec:WignerTrafoAlgorithms}

The Wigner transform $\coeffTrafo{N}$ introduced in \Cref{def:DFS-TrafoW} is a coefficient transform mapping harmonic to Fourier coefficients and is therefore independent of the evaluation points used in the NSOFT.
In this section, we outline two common algorithmic realizations.
A detailed numerical comparison will be presented in \Cref{sec:Numerics}.

The First approach is the \emph{Wigner transform via fast polynomial transform (FPT)}~\cite{Potts2009,Kostelec2008}, previously introduced in \Cref{rem:WignerTrafoPotts}.
It achieves a complexity of $\mathcal O(N^{3}\log^{2}N)$ but suffers from numerical instabilities~\cite{Potts2003}, which can be mitigated by a stabilization step proposed in~\cite{Potts1998}.

A simpler but asymptotically slower alternative is the \emph{direct Wigner transform}, obtained by implementing \Cref{eq:WignerCoeffTrafo} for all index triples $k,j,l=-N,\dots,N$~\cite{Bunge1982,Risbo1996}. Its cost is $\mathcal O(N^{4})$ flops.
Despite its higher complexity, the method has two practical advantages.

First, its simplicity makes the incorporation of symmetry reductions straightforward, see \Cref{sec:Symmetries}.

Second, while the Wigner transform via FPT requires the costly precomputation of all Wigner-d matrices up to degree $N$ at $N+1$ nodes, the direct Wigner transform only needs the special values $\Wignerd{n}{k}{l}(0)$ at $x=0$, which can be obtained from recurrence relations based on Jacobi polynomials.
These recurrences are known to be numerically unstable~\cite{Dachsel2006,Feng2015,Allen2019,Wang2022}, though see~\cite{Gumerov2015} for a weakly unstable variant suitable for large bandwidth.
Nevertheless, our numerical experiments in \Cref{sec:Numerics} demonstrate that the error remains manageable.

\begin{remark}
  We implemented the direct Wigner transform in the MATLAB toolbox \texttt{MTEX}~\cite{Hielscher2007} as a C++ script, which processes the data in a linear, cache-friendly order.
  Furthermore, iterating over the bandwidth $n=0,\dots,N$ allows the Wigner-d functions to be updated on the fly, eliminating the need to keep all values in storage simultaneously.
\end{remark}


\section{Numerical Experiments}\label{sec:Numerics}

In this chapter, we present a numerical analysis of the two algorithms for the Wigner transform, described in \Cref{sec:WignerTrafoAlgorithms}:
\begin{enumerate}
  \item Direct Wigner transform (see \cref{eq:WignerCoeffTrafo})
  \item Wigner transform via fast polynomial transform (FPT) \cite{Potts2009}
\end{enumerate}
We will demonstrate that, in practice, the direct Wigner transform is faster, simpler, and more accurate than the FPT-based approach, even though its theoretical complexity is higher.

All algorithms were implemented in \texttt{C} and tested on a $3.8\,\mathrm{GHz}$ AMD Ryzen$^{\text{TM}}$ system with $128\,\mathrm{GB}$ of RAM, using double-precision arithmetic.
The implementations rely on the  FFTW 3.3.10~\cite{Frigo2021}, NFFT 3.5.3~\cite{Keiner2009}, and MTEX 6.1~\cite{Hielscher2007} libraries.
Note that both algorithms allow for parallelization and have been implemented accordingly.

\subsection{Running Time}
The direct Wigner transform has higher asymptotic complexity and is therefore theoretically much slower than the FPT-based method for large bandwidths.
In practice, however, such bandwidths are difficult to reach, since the three-dimensional setting causes cubic growth in both the number of harmonic coefficients and the runtime.
Our numerical experiments, illustrated in \Cref{fig:NSOFT_Time}, indicate that for bandwidths below $256$, the direct Wigner transform outperforms the FPT-based approaches. This is partly because modern computing architectures have significantly sped up direct matrix-vector multiplications.

In \Cref{fig:NSOFT_Time}, we explicitly distinguish between the fast Wigner transform with and without precomputations.
These expensive precomputations, involving roughly $16N^3\log N$ evaluations of adapted Wigner-d functions~\cite{Potts2009}, are required for the FPT and need to be kept in memory, which further slows down the method.
Consequently, we could not run the FPT-based approach for $N>256$ due to memory limitations.
Furthermore, the precomputations depend on the bandwidth and must be redone, whenever the NSOFT is computed for a different $N$.

Nevertheless, when computing the $\SO3$ Fourier transform, the Wigner transform is combined with an NFFT, which makes the CPU time of the entire algorithm ultimately limited by the NFFT.

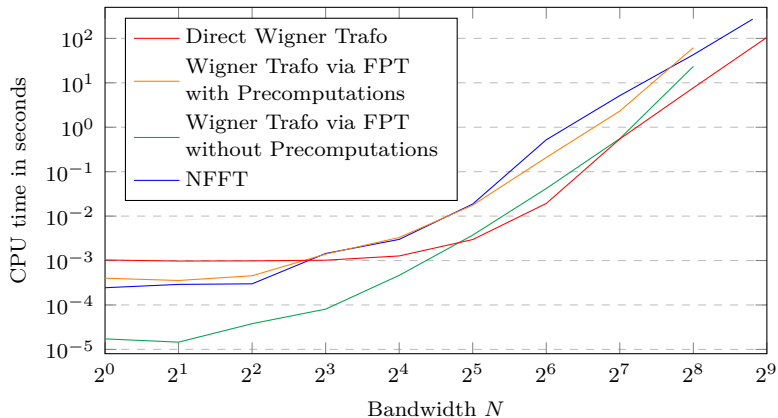
\begin{figure}[H]
  \centering
  \begin{subfigure}[c]{0.7\textwidth}
    \begin{tikzpicture}
      \begin{axis}[
        width = \textwidth,
        height = 0.6\textwidth,
        xmin=1, xmax=512,
        ymin=8*10^(-6), ymax=500,
        ymode=log, xmode=log, log basis x={2},
        xtick={1,2,4,8,16,32,64,128,256,512},
        ytick={10^-5,10^(-4),0.001,0.01,0.1,1,10,100},
        minor tick style = {draw=none},
        ymajorgrids,
        grid style = {dashed, gray!30},
        font = {\scriptsize},
        legend style={text width=3.25cm,align=left,xshift=-3pt},
        legend pos=north west,
        reverse legend,
        xlabel={Bandwidth $N$},
        ylabel={CPU time (s)},
        at={(0,0)},
        anchor=center,
        clip=false
        ]
        \addplot[color=blue,dotted,line width=1.2pt] table [x=bandwidth, y=NFFT3_N3_m4_sigma15, col sep=comma]{data/NSOFTRunningTime.txt};
        \addlegendentry{NFFT};
        \addplot[color=Green,dashdotted,line width=0.8pt] table [x=bandwidth, y=fastWCT_withoutPre, col sep=comma]{data/NSOFTRunningTime.txt};
        \addlegendentry{\shortstack[l]{Wigner Trafo via FPT \\ without Precomputations}};
        \addplot[color=orange,dashed,line width=0.8pt] table [x=bandwidth, y=fastWCT, col sep=comma]{data/NSOFTRunningTime.txt};
        \addlegendentry{\shortstack[l]{Wigner Trafo via FPT \\ with Precomputations}};
        \addplot[color=red,solid,line width=0.8pt] table [x=bandwidth, y=directWCT, col sep=comma]{data/NSOFTRunningTime.txt};
        \addlegendentry{Direct Wigner Trafo};
      \end{axis}
    \end{tikzpicture}
  \end{subfigure}
  \caption{Comparison of the CPU times, that are required to compute the direct Wigner transform, the Wigner transform via FPT and the NFFT (oversampling factor $\sigma=1.5$, cut-off parameter $m=4$, Kaiser-Bessel window function, $N^3$ nodes).}\label{fig:NSOFT_Time}
\end{figure}

A major advantage of the direct Wigner transform lies in its simpler implementation, which makes it much easier to exploit the symmetry properties from \Cref{sec:Symmetries} for further runtime reduction.

\subsection{Accuracy}
We now assess the stability of both algorithms for various bandwidths $N\in\IN$.
For this purpose, we randomly generate harmonic coefficient vectors $\bm{\hat f}\in\IC^{\abs{\J{N}}}$ with entries uniformly distributed on the complex unit disk.

The corresponding band-limited function is then evaluated on the Clenshaw-Curtis quadrature grid $\mathcal R\subset \SO3$, and the harmonic coefficients are reconstructed via numerical quadrature from these sample points.

To quantify the accuracy of the Wigner transform implementation $\coeffTrafo{N}$, we measure the relative error
\begin{align*}
  E_{\ell_{1}\to\ell_{2}} = \max_{i=1,\dots,100} \frac{\norm{ \fhat_{i} - \bm{D}^{-1}_{\mathcal R_{N},N}  \bm{D}_{\mathcal R_{N},N}  \fhat_{i} }_{\ell_{2}}}{\norm{\fhat_{i}}_{{\ell_{1}}}} = \max_{i=1,\dots,100} \frac{\norm{ \fhat_{i} - \coeffTrafo{N}^{H} \, (\FourierTrafo{\mathcal R,N}^{H}  \bm\Lambda \FourierTrafo{\mathcal R,N} ) \, \coeffTrafo{N} \fhat_{i} }_{\ell_{2}} }{\norm{\fhat_{i}}_{{\ell_{1}}}},
\end{align*}
where $\FourierTrafo{\mathcal R,N}$ denotes the equispaced Fourier matrix and $\Lambda$ is the diagonal matrix of Clenshaw-Curtis quadrature weights.

Since the Fourier matrix is orthogonal up to a scaling factor, the condition number of
$\FourierTrafo{\mathcal R_{N},N}^{H}  \bm\Lambda \FourierTrafo{\mathcal R_{N},N}$ is approximately $2\pi N$, reflecting the ratio between the largest and smallest quadrature weights.
Consequently, the condition number of the Wigner transform satisfies
\[ \kappa(\coeffTrafo{N}) = \sqrt{2\pi N}. \]

In~\Cref{fig:WCT_Error}, we compare the relative errors of the two algorithms.
While the FPT error grows with bandwidth, the direct Wigner transform becomes more accurate.

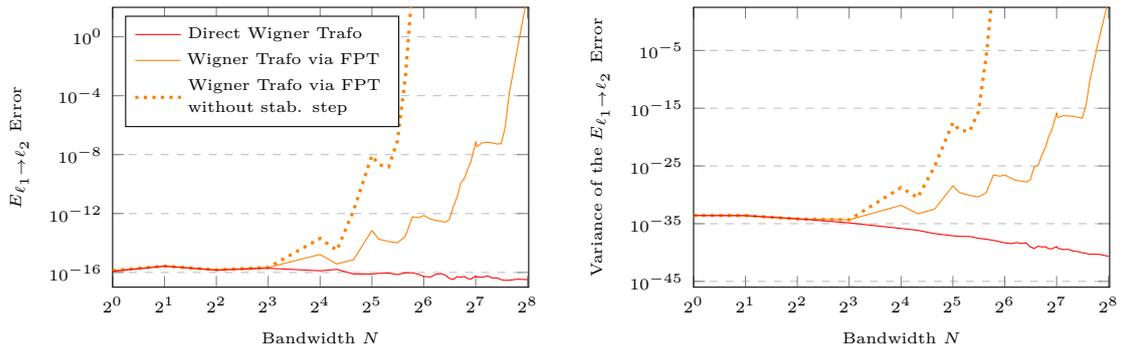
\begin{figure}[H]
  \centering
  \begin{subfigure}[c]{0.48\textwidth}
    \begin{tikzpicture}
      \begin{axis}[
        width = \textwidth,
        height = 0.75\textwidth,
        xmin=1, xmax=260,
        ymin=1*10^(-17), ymax=10^(2),
        ymode=log, xmode=log, log basis x={2},
        xtick={1,2,4,8,16,32,64,128,256},
        ytick={10^(-16),10^(-12),10^(-8),10^(-4),10^(0),10^(4)},
        minor tick style = {draw=none},
        ymajorgrids,
        grid style = dashed,
        font = {\tiny},
        legend style={text width=2.6cm,align=left,xshift=-2pt},
        legend pos=north west,
        reverse legend,
        ylabel={$E_{\ell_{1}\to\ell_{2}}$ Error},
        xlabel={Bandwidth $N$}
        ]
        \addplot[color=black,dotted,line width=1.2pt] table [x=bw, y=FWTwoStab, col sep=comma]{data/Error_fuaWCT.txt};
        \addlegendentry{\shortstack[l]{Wigner Trafo via FPT \\ without stab. step}};

        \addplot[color=orange,dashed,line width=1pt] table [x=bw, y=FWT, col sep=comma]{data/Error_fuaWCT.txt};
        \addlegendentry{Wigner Trafo via FPT};

        \addplot[color=red,line width=0.8pt] table [x=bw, y=directWCT, col sep=comma]{data/Error_fuaWCT.txt};
        \addlegendentry{Direct Wigner Trafo};
      \end{axis}
    \end{tikzpicture}
  \end{subfigure}
  \hfill
  \begin{subfigure}[c]{0.48\textwidth}
    \begin{tikzpicture}
      \begin{axis}[
        width = \textwidth,
        height = 0.75\textwidth,
        xmin=1, xmax=260,
        ymin=1*10^(-46), ymax=3*10^(2),
        ymode=log, xmode=log, log basis x={2},
        xtick={1,2,4,8,16,32,64,128,256},
        ytick={10^(-45),10^(-35),10^(-25),10^(-15),10^(-5),10^(5)},
        minor tick style = {draw=none},
        ymajorgrids,
        grid style = dashed,
        font = {\tiny},
        legend style={text width=2.5cm,align=left},
        legend pos=north west,
        reverse legend,
        ylabel={Variance of the $E_{\ell_{1}\to\ell_{2}}$ Error},
        xlabel={Bandwidth $N$}
        ]
        \addplot[color=black,dotted,line width=1.2pt] table [x=bw, y=FWTwoStab, col sep=comma]{data/Variance_Error_fuaWCT.txt};

        \addplot[color=orange,dashed,line width=1pt] table [x=bw, y=FWT, col sep=comma]{data/Variance_Error_fuaWCT.txt};

        \addplot[color=red,line width=0.8pt] table [x=bw, y=directWCT, col sep=comma]{data/Variance_Error_fuaWCT.txt};
      \end{axis}
    \end{tikzpicture}
  \end{subfigure}
  \caption{Investigation of the accuracy of different Wigner transform implementations depending on the bandwidth. The left panel shows the relative error $E_{\ell_{1}\to\ell_{2}}$ for a randomly chosen harmonic coefficient vector $\fhat$, while the right panel displays the variance of this error, estimated from 100 independent random coefficient vectors.}\label{fig:WCT_Error}
\end{figure}

\begin{changedbox}
The implementation of the FPT in the context of Wigner-d functions becomes numerically unstable for higher bandwidths.
The main reason is that, for certain index quadruples $(n,k,l,c)$, the absolute values of the associated Wigner-d functions $\Wignerd{n}{k}{l}(x;c)$, defined in~\cite{Potts2009}, with shift parameter $c\in\IN_{0}$, become large.
During the cascade summation used in the FPT, these values are multiplied by small values of $\Wignerd{n}{k}{l}(x)$, which leads to the numerical instabilities due to cancellation and error amplification. A related effect is discussed in detail for associated Legendre functions in~\cite{Potts1998}. To address this issue, the authors describe a stabilization technique that replaces problematic multiplication steps in the FPT whenever $\abs{\Wignerd{n}{k}{l}(x;c)}$ exceeds a threshold $\kappa$. This improves stability at the expense of increased computational cost.
However, even with this stabilization technique (with threshold $\kappa=10^{3}$), the algorithm fails for high bandwidths $N>100$.
\end{changedbox}


\begin{addedbox}
\subsection*{Code Availability}
All numerical experiments have been realized using the Matlab Toolbox MTEX. The cor-
responding scripts are available at \url{https://github.com/mtex-toolbox/mtex-paper/tree/master/DFSMethodonSO3}.
\end{addedbox}

\bibliography{literatur}

@Book{Varshalovich1988,
  author    = {Varshalovich, Dmitry Aleksandrovich and Moskalev, Anatoly Nikolayevich and Khersonskii, Valerii Kelmanovich},
  publisher = {World Scientific Publishing Co., Inc., Teaneck, NJ},
  title     = {Quantum theory of angular momentum},
  year      = {1988},
  isbn      = {9971-50-107-4},
  note      = {Translated from the Russian},
  doi       = {10.1142/0270},
  mrnumber  = {1022665},
  pages     = {xii+514},
}

@Article{Potts2009,
  author   = {Potts, Daniel and Prestin, Jürgen and Vollrath, Antje},
  journal  = {Numerical Algorithms},
  title    = {A fast algorithm for nonequispaced {F}ourier transforms on the rotation group},
  year     = {2009},
  issn     = {1017-1398},
  number   = {3},
  pages    = {355--384},
  volume   = {52},
  doi      = {10.1007/s11075-009-9277-0},
  mrnumber = {2563947},
}

@Article{Potts2003,
  author   = {Potts, Daniel and Steidl, Gabriele and Tasche, Manfred},
  journal  = {Journal of Concrete and Applicable Mathematics},
  title    = {Numerical stability of fast trigonometric transforms - a worst case study},
  year     = {2003},
  issn     = {1548-5390},
  number   = {1},
  pages    = {1--35},
  volume   = {1},
  mrnumber = {2132664},
}

@Book{Plonka2018,
  author    = {Plonka, Gerlind and Potts, Daniel and Steidl, Gabriele and Tasche, Manfred},
  publisher = {Birkh\"{a}user/Springer, Cham},
  title     = {Numerical {F}ourier analysis},
  year      = {2018},
  isbn      = {978-3-030-04305-6},
  series    = {Applied and Numerical Harmonic Analysis},
  doi       = {10.1007/978-3-030-04306-3},
  mrnumber  = {3890075},
  pages     = {xvi+168},
}

@Article{Kostelec2008,
  author   = {Kostelec, Peter J. and Rockmore, Daniel N.},
  journal  = {The Journal of Fourier Analysis and Applications},
  title    = {F{FT}s on the rotation group},
  year     = {2008},
  issn     = {1069-5869},
  number   = {2},
  pages    = {145--179},
  volume   = {14},
  doi      = {10.1007/s00041-008-9013-5},
  mrnumber = {2383721},
}

@Article{Khalid2015,
  author        = {Zubair Khalid and Salman Durrani and Rodney A. Kennedy and Yves Wiaux and Jason D. McEwen},
  journal       = {IEEE Signal Processing Letters, Vol. 23, No. 2, February 2016},
  title         = {Gauss-{L}egendre Sampling on the Rotation Group},
  year          = {2015},
  month         = aug,
  archiveprefix = {arXiv},
  doi           = {10.1109/LSP.2015.2503295},
  eprint        = {1508.03353},
  primaryclass  = {cs.IT},
}

@Book{Szegoe1975,
  author    = {Szeg\H{o}, G\'{a}bor},
  publisher = {American Mathematical Society, Providence, R.I.},
  title     = {Orthogonal polynomials},
  year      = {1975},
  edition   = {Fourth},
  series    = {American Mathematical Society Colloquium Publications, Vol. XXIII},
  mrnumber  = {0372517},
  pages     = {xiii+432},
}

@Article{Graef2009,
  author   = {Gräf, Manuel and Potts, Daniel},
  journal  = {Numerical Functional Analysis and Optimization. An International Journal},
  title    = {Sampling sets and quadrature formulae on the rotation group},
  year     = {2009},
  issn     = {0163-0563},
  number   = {7-8},
  pages    = {665--688},
  volume   = {30},
  doi      = {10.1080/01630560903163508},
  mrnumber = {2555654},
}

@Article{Risbo1996,
  author    = {Torben Risbo},
  journal   = {Journal of Geodesy},
  title     = {Fourier transform summation of {L}egendre series and {D}-functions},
  year      = {1996},
  month     = jul,
  number    = {7},
  pages     = {383--396},
  volume    = {70},
  doi       = {10.1007/bf01090814},
  publisher = {Springer Science and Business Media {LLC}},
}

@Book{Vilenkin2012,
  author    = {Vilenkin, Naum Yakovlevich and Klimyk, Anatolii U.},
  publisher = {Springer},
  title     = {Representation of {L}ie groups and special functions: Volume 1 simplest {L}ie groups, special functions and integral transforms},
  year      = {2012},
  isbn      = {9789401135382},
  subtitle  = {Simplest Lie Groups, Special Functions and Integral Transforms},
}

@Article{Mildenberger2023,
  author        = {Mildenberger, Sophie and Quellmalz, Michael},
  journal       = {Sampling Theory, Signal Processing, and Data Analysis 21, Article number: 23, 2023. (Open access)},
  title         = {A double {F}ourier sphere method for {$d$}-dimensional manifolds},
  year          = {2023},
  month         = jan,
  number        = {2},
  volume        = {21},
  archiveprefix = {arXiv},
  copyright     = {arXiv.org perpetual, non-exclusive license},
  doi           = {10.1007/s43670-023-00064-8},
  eprint        = {2301.06540},
  primaryclass  = {math.NA},
  publisher     = {Springer Science and Business Media {LLC}},
}

@Article{Feng2015,
  author    = {Feng, X. M. and Wang, P. and Yang, W. and Jin, G. R.},
  journal   = {Physical Review E},
  title     = {High-precision evaluation of {W}igner’s d-matrix by exact diagonalization},
  year      = {2015},
  issn      = {1550-2376},
  month     = oct,
  number    = {4},
  pages     = {043307},
  volume    = {92},
  doi       = {10.1103/physreve.92.043307},
  publisher = {American Physical Society (APS)},
}

@Article{Allen2019,
  author    = {Allen, Wesley D.},
  journal   = {The Journal of Chemical Physics},
  title     = {Wigner numbers},
  year      = {2019},
  issn      = {1089-7690},
  month     = dec,
  number    = {24},
  volume    = {151},
  doi       = {10.1063/1.5135721},
  publisher = {AIP Publishing},
}

@Article{Wang2022,
  author    = {Wang, Bin-Lei and Gao, Fan and Wang, Long-Jun and Sun, Yang},
  journal   = {Physical Review C},
  title     = {Effective and efficient algorithm for the {W}igner rotation matrix at high angular momenta},
  year      = {2022},
  issn      = {2469-9993},
  month     = nov,
  number    = {5},
  pages     = {054320},
  volume    = {106},
  doi       = {10.1103/physrevc.106.054320},
  publisher = {American Physical Society (APS)},
}

@Article{Dachsel2006,
  author    = {Dachsel, Holger},
  journal   = {The Journal of Chemical Physics},
  title     = {Fast and accurate determination of the {W}igner rotation matrices in the fast multipole method},
  year      = {2006},
  issn      = {1089-7690},
  month     = apr,
  number    = {14},
  volume    = {124},
  doi       = {10.1063/1.2194548},
  publisher = {AIP Publishing},
}

@Article{Mildenberger2022,
  author    = {Mildenberger, Sophie and Quellmalz, Michael},
  journal   = {Journal of Fourier Analysis and Applications},
  title     = {Approximation Properties of the Double {F}ourier Sphere Method},
  year      = {2022},
  issn      = {1531-5851},
  month     = mar,
  number    = {2},
  volume    = {28},
  doi       = {10.1007/s00041-022-09928-4},
  publisher = {Springer Science and Business Media LLC},
}

@InBook{Gumerov2015,
  author    = {Gumerov, Nail A. and Duraiswami, Ramani},
  pages     = {105--141},
  publisher = {Springer International Publishing},
  title     = {Recursive computation of spherical harmonic rotation coefficients of large degree},
  year      = {2015},
  isbn      = {9783319132303},
  booktitle = {Excursions in Harmonic Analysis, Volume 3},
  doi       = {10.1007/978-3-319-13230-3_5},
  issn      = {2296-5017},
}

@Article{Keiner2009,
  author    = {Keiner, Jens and Kunis, Stefan and Potts, Daniel},
  journal   = {ACM Transactions on Mathematical Software},
  title     = {Using {NFFT 3} - A Software Library for Various Nonequispaced Fast {F}ourier Transforms},
  year      = {2009},
  issn      = {1557-7295},
  month     = aug,
  number    = {4},
  pages     = {1--30},
  volume    = {36},
  doi       = {10.1145/1555386.1555388},
  publisher = {Association for Computing Machinery (ACM)},
}

@Article{Hielscher2010,
  author    = {Hielscher, Ralf and Prestin, Jürgen and Vollrath, Antje},
  journal   = {Mathematical Geosciences},
  title     = {Fast Summation of Functions on the Rotation Group},
  year      = {2010},
  issn      = {1874-8953},
  month     = jun,
  number    = {7},
  pages     = {773--794},
  volume    = {42},
  doi       = {10.1007/s11004-010-9281-x},
  publisher = {Springer Science and Business Media LLC},
}

@Article{Graef2011,
  author    = {Gräf, Manuel},
  journal   = {Advances in Computational Mathematics},
  title     = {A unified approach to scattered data approximation on {$S^{3}$} and {SO}(3)},
  year      = {2011},
  issn      = {1572-9044},
  month     = sep,
  number    = {3},
  pages     = {379--392},
  volume    = {37},
  doi       = {10.1007/s10444-011-9214-3},
  publisher = {Springer Science and Business Media LLC},
}

@Article{Graef2008,
  author    = {Gräf, Manuel and Kunis, Stefan},
  journal   = {Electronic Transactions on Numerical Analysis.Volume 31, pp. 30-39},
  title     = {Stability results for scattered data interpolation on the rotation group},
  year      = {2008},
  issn      = {1068-9613},
  pages     = {30-39},
  volume    = {31},
  date      = {2008},
  publisher = {Kent State University},
}

@Article{Potts2001,
  author  = {Potts, Daniel and Steidl, Gabriele and Tasche, Manfred},
  journal = {Mod. Sampl. theory},
  title   = {Fast {F}ourier transforms for nonequispaced data: a tutorial},
  year    = {2001},
  month   = {01},
  pages   = {19-25},
  volume  = {23},
}

@Book{Gradshteyn1980,
  author    = {Gradshteyn, I. S. and Ryzhik, I. M.},
  publisher = {Elsevier Science},
  title     = {Table of integrals, series, and products},
  year      = {1980},
  address   = {Burlington},
  isbn      = {978-0-12-294760-5},
  pagetotal = {1207},
  ppn_gvk   = {813304881},
}

@Article{Orszag1974,
  author  = {Steven A. Orszag},
  journal = {Monthly Weather Review},
  title   = {Fourier Series on Spheres},
  year    = {1974},
  pages   = {56-75},
  volume  = {102},
  url     = {https://api.semanticscholar.org/CorpusID:119591242},
}

@Article{Boyd1978,
  author    = {Boyd, John P.},
  journal   = {Monthly Weather Review},
  title     = {The Choice of Spectral Functions on a Sphere for Boundary and Eigenvalue Problems: A Comparison of {C}hebyshev, {F}ourier and Associated {L}egendre Expansions},
  year      = {1978},
  issn      = {1520-0493},
  month     = aug,
  number    = {8},
  pages     = {1184--1191},
  volume    = {106},
  doi       = {10.1175/1520-0493(1978)106<1184:tcosfo>2.0.co;2},
  publisher = {American Meteorological Society},
}

@Article{Yee1980,
  author    = {Yee, Samuel Y. K.},
  journal   = {Monthly Weather Review},
  title     = {Studies on {F}ourier Series on Spheres},
  year      = {1980},
  issn      = {1520-0493},
  month     = may,
  number    = {5},
  pages     = {676--678},
  volume    = {108},
  doi       = {10.1175/1520-0493(1980)108<0676:sofsos>2.0.co;2},
  publisher = {American Meteorological Society},
}

@Article{Kovacs2003,
  author    = {Kovacs, Julio A. and Chacón, Pablo and Cong, Yao and Metwally, Essam and Wriggers, Willy},
  journal   = {Acta Crystallographica Section D Biological Crystallography},
  title     = {Fast rotational matching of rigid bodies by fast {F}ourier transform acceleration of five degrees of freedom},
  year      = {2003},
  issn      = {0907-4449},
  month     = jul,
  number    = {8},
  pages     = {1371--1376},
  volume    = {59},
  doi       = {10.1107/s0907444903011247},
  publisher = {International Union of Crystallography (IUCr)},
}

@InProceedings{Makadia2003,
  author     = {Makadia, Ameesh and Daniilidis, Kostas},
  booktitle  = {IEEE Computer Society Conference on Computer Vision and Pattern Recognition, 2003. Proceedings.},
  title      = {Direct {3D}-rotation estimation from spherical images via a generalized shift theorem},
  year       = {2003},
  pages      = {II-217--24},
  publisher  = {IEEE Comput. Soc},
  series     = {CVPR-03},
  volume     = {2},
  collection = {CVPR-03},
  doi        = {10.1109/cvpr.2003.1211473},
}

@Article{Chirikjian2001,
  author    = {Chirikjian, Gregory S. and Kyatkin, Alexander B.},
  journal   = {Applied Mechanics Reviews},
  title     = {Engineering Applications of Noncommutative Harmonic Analysis: With Emphasis on Rotation and Motion Groups},
  year      = {2001},
  issn      = {2379-0407},
  month     = nov,
  number    = {6},
  pages     = {B97--B98},
  volume    = {54},
  doi       = {10.1115/1.1421108},
  publisher = {ASME International},
}

@Article{Boogaart2007,
  author    = {van den Boogaart, Karl Gerald and Hielscher, Ralf and Prestin, Jürgen and Schaeben, Helmut},
  journal   = {Journal of Computational and Applied Mathematics},
  title     = {Kernel-based methods for inversion of the {R}adon transform on {SO}(3) and their applications to texture analysis},
  year      = {2007},
  issn      = {0377-0427},
  month     = feb,
  number    = {1},
  pages     = {122--140},
  volume    = {199},
  doi       = {10.1016/j.cam.2005.12.003},
  publisher = {Elsevier BV},
}

@Article{Merilees1973,
  author    = {Merilees, Philip E.},
  journal   = {Atmosphere},
  title     = {The pseudospectral approximation applied to the shallow water equations on a sphere},
  year      = {1973},
  issn      = {0004-6973},
  month     = jan,
  number    = {1},
  pages     = {13--20},
  volume    = {11},
  doi       = {10.1080/00046973.1973.9648342},
  publisher = {Informa UK Limited},
}

@Book{Bunge1982,
  author    = {Bunge, Hans-Joachim},
  publisher = {Elsevier},
  title     = {Texture analysis in materials science},
  year      = {1982},
  address   = {Burlington},
  isbn      = {9780408106429},
  doi       = {10.1016/c2013-0-11769-2},
  pagetotal = {614},
  ppn_gvk   = {804329435},
  subtitle  = {Mathematical Methods},
}

@Article{Haagerup2013,
  author    = {Haagerup, Uffe and Schlichtkrull, Henrik},
  journal   = {The Ramanujan Journal},
  title     = {Inequalities for {J}acobi polynomials},
  year      = {2013},
  issn      = {1572-9303},
  month     = jul,
  number    = {2},
  pages     = {227--246},
  volume    = {33},
  doi       = {10.1007/s11139-013-9472-4},
  publisher = {Springer Science and Business Media LLC},
}

@Article{Hielscher2007,
  author    = {Bachmann, Florian and Hielscher, Ralf and Schaeben, Helmut},
  journal   = {Solid State Phenomena},
  title     = {Texture Analysis with {MTEX} – Free and Open Source Software Toolbox},
  year      = {2010},
  issn      = {1662-9779},
  month     = Feb,
  pages     = {63--68},
  volume    = {160},
  doi       = {10.4028/www.scientific.net/ssp.160.63},
  publisher = {Trans Tech Publications, Ltd.},
}

@InProceedings{Frigo2021,
  author     = {Frigo, Matteo and Johnson, Steven G.},
  booktitle  = {Proceedings of the 1998 IEEE International Conference on Acoustics, Speech and Signal Processing, ICASSP ’98 (Cat. No.98CH36181)},
  title      = {F{FTW}: an adaptive software architecture for the {FFT}},
  pages      = {1381--1384},
  publisher  = {IEEE},
  series     = {ICASSP-98},
  volume     = {3},
  collection = {ICASSP-98},
  doi        = {10.1109/icassp.1998.681704},
}

@Article{Potts1998,
  author    = {Potts, Daniel and Steidl, Gabriele and Tasche, Manfred},
  journal   = {Linear Algebra and its Applications},
  title     = {Fast and stable algorithms for discrete spherical {F}ourier transforms},
  year      = {1998},
  issn      = {0024-3795},
  month     = May,
  pages     = {433--450},
  volume    = {275-276},
  doi       = {10.1016/s0024-3795(97)10013-1},
  publisher = {Elsevier BV},
}

\end{document}